\theoremstyle{plain}
\newtheorem{theorem}{Theorem}[section]
\newtheorem{lemma}{Lemma}[section]
\newtheorem{proposition}{Proposition}[section]
\theoremstyle{definition}
\newcommand{\opn}{\operatorname}
\numberwithin{equation}{section}
\begin{document}

\title{Laplace operators on holomorphic Lie algebroids}
\author{Alexandru IONESCU \footnote{Faculty of Mathematics and Computer Science, \emph{Transilvania} University of Bra\c sov,  Romania, e-mail: alexandru.codrin.ionescu@gmail.com}}
\date{}
\maketitle

\begin{abstract}
The paper introduces Laplace-type operators for functions defined on the tangent space of a Finsler Lie algebroid, using a volume form on the prolongation of the algebroid. It also presents the construction of a horizontal Laplace operator for forms defined on the prolongation of the algebroid. All of the Laplace operators considered in the paper are also locally expressed using the Chern-Finsler connection of the algebroid.
\end{abstract}

\vskip.5cm

\section*{Introduction}

The Laplacian is one of the most important and therefore intensely studied differential operator in geometry. Its main applications, in the harmonic integral and Bochner technique theories, have been analysed in the case of Riemann and K\"ahler manifolds, where the Weitzenb\"ock formulas and Hodge decomposition theorems have been obtained. In Finsler geometry, Laplacians and their applications have been mainly studied in P.L. Antonelli, B. Lackey \cite{A-L}, D. Bao, B. Lackey \cite{B-L}, O. Munteanu \cite{M.O} for the real case. In complex Finsler geometry, Laplace-type operators have been considered by C. Zhong, T. Zhong \cite{Z-Z,Z-Z1} and C. Ida \cite{Ida}.

The concept of Lie algebroid is a generalization of that of tangent bundle. Real Lie algebroids have been studied by A. Weinstein \cite{W}, P. Popescu \cite{PP1,PP2}, M. Anastasiei \cite{A}, L. Popescu \cite{P2}. Complex and holomorphic Lie algebroids have been investigated by C.-M. Marle \cite{M}, P. Popescu \cite{PP3}, P. Popescu, C. Ida \cite{I-Po}.

E. Martinez \cite{Ma1,Ma2} has introduced the notion of prolongation of a Lie algebroid, as a tool for studying the geometry of a Lie algebroid in a context which is similar to the tangent bundle of a manifold. In this paper, we use this setting in complex geometry to continue the study of holomorphic Lie algebroids from \cite{I,I-Mu,I2} for introducing Laplace-type operators for functions and for forms on a Finsler algebroid.

The first section briefly recalls notions from the geometry of Finsler Lie algebroids (\cite{I,I-Mu}) which will be used in defining the Laplace operators. The second section presents the case of Finsler algebroids (\cite{I2}), when a Chern-Finsler connection is defined from a Finsler function on the algebroid. The third section introduces two Laplacians for functions, a horizontal and a vertical one, following the ideas from the case of a complex Finsler manifold (\cite{Z-Z}). In the last section a horizontal Laplacian for forms is defined and expressed in coordinates.

Let $M$ be a complex $n$-dimensional manifold and $E$ a holomorphic vector bundle of rank $m$ over $M$. Denote by $\pi:E\rightarrow M$ the holomorphic bundle projection, by $\Gamma(E)$ the module of holomorphic sections of $\pi$ and let $T_{\mathbb{C}}M = T'M\oplus T''M$ be the complexified tangent bundle of $M$, split into the holomorphic and antiholomorphic tangent bundles.

The holomorphic vector bundle $E$ over $M$ is called anchored if there exists a holomorphic vector bundle morphism $\rho: E\rightarrow T'M$, called anchor map.

A \textit{holomorphic Lie algebroid} over $M$ is a triple $(E,[\cdot,\cdot]_E,\rho_E)$, where $E$ is a holomorphic vector bundle anchored over $M$, $[\cdot,\cdot]_E$ is a Lie bracket on $\Gamma(E)$ and $\rho_E:\Gamma(E)\rightarrow\Gamma(T'M)$ is the homomorphism of complex modules induced by the anchor map $\rho$ such that 
\begin{equation} \label{1}
[s_1,fs_2]_E = f[s_1,s_2]_E+\rho_E(s_1)(f)s_2
\end{equation}
for all $s_1,s_2\in\Gamma(E)$ and all $f\in\mathcal{H}(M)$.

As a consequence of this definition, we have that $\rho_E([s_1,s_2]_E) = [\rho_E(s_1),\\ \rho_E(s_2)]_{T'M}$ (\cite{M}), which means that $\rho_E:(\Gamma(E),[\cdot,\cdot]_E)\rightarrow(\Gamma(T'M),[\cdot,\cdot])$ is a complex Lie algebra homomorphism.

Locally, if $\{z^{k}\}_{k=\overline{1,n}}$ is a complex coordinate system on $U\subset M$ and $\{e_{\alpha }\}_{\alpha =\overline{1,m}}$ is a local frame of holomorphic sections of $E$ on $U$, then $(z^k,u^\alpha)$ are local complex coordinates on $\pi^{-1}(U)\subset E$, where $u = u^\alpha e_\alpha(z)\in E$.

The action of the holomorphic anchor map $\rho_E$ can locally be described by
\begin{equation}
\label{5}
\rho_E(e_\alpha) = \rho_\alpha^k\dfrac{\partial}{\partial z^k},
\end{equation}
while the Lie bracket $[\cdot,\cdot]_E$ is locally given by 
\begin{equation}
[e_\alpha,e_\beta]_E = \mathcal{C}^{\:\gamma}_{\alpha\beta}e_\gamma.
\label{6}
\end{equation}
The holomorphic functions $\rho^k_\alpha = \rho^k_\alpha(z)$ and $\mathcal{C}^{\:\gamma}_{\alpha\beta} = \mathcal{C}^{\:\gamma}_{\alpha\beta}(z)$ on $M$ are called \emph{the holomorphic anchor coefficients} and \emph{the holomorphic structure functions} of the Lie algebroid E, respectively.

Since $E$ is a holomorphic vector bundle, the natural complex structure acts on its sections by $J_E(e_\alpha) = ie_\alpha$ and $J_E(\bar{e}_\alpha) = -i\bar{e}_\alpha$. Hence, the complexified bundle $E_{\mathbb{C}}$ of $E$ decomposes into $E_{\mathbb{C}} = E'\oplus E''$. The local basis of sections of $E'$ is $\{e_\alpha\}_{\alpha=\overline{1,m}}$, while for $E''$, the basis is represented by $\{\bar{e}_\alpha := e_{\bar{\alpha}}\}_{\alpha=\overline{1,m}}$. Since $\rho_E:\Gamma(E)\rightarrow\Gamma(T'M)$ is a homomorphism of complex modules, it extends naturally to the complexified bundle by $\rho'(e_\alpha) = \rho_E(e_\alpha)$ and $\rho''(e_{\bar{\alpha}}) = \rho_E(e_{\bar{\alpha}})$. Thus, we can write $\rho_E = \rho'\oplus \rho''$ on the complexified bundle, and since $E$ is holomorphic, the functions $\rho(z)$ are holomorphic, hence $\rho_\alpha^{\bar{k}} = \rho_{\bar{\alpha}}^k = 0$ and $\rho_{\bar{\alpha}}^{\bar{k}} = \overline{\rho_\alpha^k}$.

As a vector bundle, the holomorphic Lie algebroid $E$ has a natural structure of complex manifold. As usual in Finsler geometry, it is of interest to consider the complexified tangent bundle $T_{\mathbb{C}}E$. Two approaches on the tangent bundle of a holomorphic Lie algebroid $E$ were described in \cite{I-Mu}. The first is the classical study of the tangent bundle of the manifold $E$, while the second is that of the prolongation of $E$. The latter idea appeared from the need of introducing geometrical objects such as nonlinear connections or sprays which could be studied in a similar manner to the tangent bundle of a complex manifold and therefore this setting seems more attractive for studying Finsler structures. 

\section{The prolongation of a holomorphic Lie algebroid}

We briefly recall here the construction of the prolongation algebroid, as defined in \cite{Ma1,Ma2} in the real case and described in detail in \cite{I,I-Mu} for the holomorphic case.

For the holomorphic Lie algebroid $E$ over a complex manifold $M$, its prolongation was introduced using the tangent mapping $\pi'_\ast: T'E\rightarrow T'M$ and the holomorphic anchor map $\rho_E: E\rightarrow T'M$. Define the subset $\mathcal{T}'E$ of $E\times T'E$ by $\mathcal{T}'E = \{(e,v)\in
E\times T'E\ |\ \rho(e) = \pi'_\ast(v)\}$ and the mapping $\pi'_{\mathcal{T}}: \mathcal{T}'E\rightarrow E$, given by $\pi'_{\mathcal{T}}(e,v) = \pi_E(v)$, where $\pi'_E: T'E\rightarrow E$ is the tangent projection. Then $(\mathcal{T}'E,\pi'_{\mathcal{T}},E)$ is a holomorphic vector bundle over $E$, of rank $2m$. For this reason, we can introduce on $\mathcal{T}'E$ some specific elements (for instance, the Chern-Finsler connection) from complex Finsler geometry. Moreover, it is easy to verify that the projection onto the second factor $\rho'_{\mathcal{T}}: \mathcal{T}'E\rightarrow T'E$, $\rho'_{\mathcal{T}}(e,v) = v$, is the
anchor of a new holomorphic Lie algebroid over the complex manifold $E$.

The holomorphic Lie algebroid $E$ has a structure of holomorphic vector bundle with respect to the complex structure $J_E$. Let $E_{\mathbb{C}}$ be the complexified bundle of $E$ and $T_{\mathbb{C}}E = T'E\oplus T''E$, its complexified tangent bundle. A similar idea to that of Martinez (\cite{Ma2}) and Popescu (\cite{P2}) in the real case leads to the definition of the complexified prolongation $\mathcal{T}_{\mathbb{C}}E$ of $E$ as follows. We extend $\mathbb{C}$-linearly the tangent mapping $\pi'_\ast: T'E\rightarrow T'M$ and the anchor $\rho_E: E\rightarrow T'M$ to obtain $\pi_{\ast,\mathbb{C}}: T_{\mathbb{C}}E\rightarrow T_{\mathbb{C}}M$ and $\rho_{E,\mathbb{C}}: E_{\mathbb{C}}\rightarrow T_{\mathbb{C}}M$, respectively. If $\pi_{E,\mathbb{C}}: T_{\mathbb{C}}E\rightarrow E_{\mathbb{C}}$ is the tangent projection extended to the complexified spaces, then we can define the subset $\mathcal{T}_{\mathbb{C}}E$ of $E_{\mathbb{C}}\times T_{\mathbb{C}}E$ by 
\begin{equation*}
\mathcal{T}_{\mathbb{C}}E = \{(e,v)\in E_{\mathbb{C}}\times T_{\mathbb{C}}E\ |\ \rho_{E,\mathbb{C}}(e) = \pi_{\ast,\mathbb{C}}(v)\}
\end{equation*}
and the mapping $\pi_{\mathcal{T},\mathbb{C}}: \mathcal{T}_{\mathbb{C}}E\rightarrow E_{\mathbb{C}}$ by $\pi_{\mathcal{T},\mathbb{C}}(e,v) = \pi_{E,\mathbb{C}}(v)$. Thus, we obtain a complex vector bundle $(\mathcal{T}_{\mathbb{C}}E,\pi_{\mathcal{T},\mathbb{C}},E_{\mathbb{C}})$ over $E_{\mathbb{C}}$. Also, the projection onto the second factor, 
\begin{equation*}
\rho_{\mathcal{T},\mathbb{C}}: \mathcal{T}_{\mathbb{C}}E \rightarrow T_{\mathbb{C}}E, \quad \rho_{\mathcal{T},\mathbb{C}}(e,v) = v,
\end{equation*}
is the anchor of a complex Lie algebroid over $E_{\mathbb{C}}$, called \emph{the complexified prolongation} of $E$.

The vertical subbundle of the complexified prolongation is defined using the projection onto the first factor $\tau_1: \mathcal{T}'E \rightarrow E$, $\tau_1(e,v) = e$, by 
\begin{equation*}
V\mathcal{T}'E = \ker\tau_1 = \{(e,v)\in\mathcal{T}'E\ |\ \tau_1(e,v) = 0\}.
\end{equation*}
Any element of $V\mathcal{T}'E$ has the form $(0,v)\in E\times \mathcal{T}'E$, with $\pi'_\ast(v) = 0$, thus vertical elements $(0,v)\in V\mathcal{T}'E$ have the property $v\in\ker\pi'_\ast$. By conjugation, we obtain $V\mathcal{T}''E$ and the complexified vertical subbundle of the prolongation $\mathcal{T}_{\mathbb{C}}E$ is $V\mathcal{T}_{\mathbb{C}}E = V\mathcal{T}'E\oplus V\mathcal{T}''E$.

The local basis of holomorphic sections in $\Gamma(\mathcal{T}'E)$ is $\{\mathcal{Z}_\alpha,\mathcal{V}_\alpha\}$, defined by 
\begin{equation*}
\mathcal{Z}_\alpha(u) = \bigg(e_\alpha(\pi(u)),\rho^k_\alpha\dfrac{\partial}{\partial z^k}\bigg|_{u}\bigg), \qquad \mathcal{V}_\alpha(u) = \bigg(0,\dfrac{\partial}{\partial u^\alpha}\bigg|_{u}\bigg),
\end{equation*}
where $\bigg\{\dfrac{\partial}{\partial z^k},\dfrac{\partial}{\partial u^\alpha}\bigg\}$ is the natural frame on $T'E$. Therefore, a local basis of sections in $\Gamma(\mathcal{T}_{\mathbb{C}}E)$ is $\{\mathcal{Z}_\alpha,\mathcal{V}_\alpha,\mathcal{Z}_{\bar{\alpha}},\mathcal{V}_{\bar{\alpha}}\}$, where $\mathcal{Z}_{\bar{\alpha}},\mathcal{V}_{\bar{\alpha}}$ are obtained by conjugation.

For a change of local charts on $E$ with the transition matrix $M$,
\begin{equation*}
\widetilde{z}^k = \widetilde{z}^k(z),\qquad \widetilde{u}^\alpha = M^\alpha_\beta(z)u^\beta,
\end{equation*}
the basis of sections on $E$, $\{e_\alpha\}$, changes by the inverse $W:=M^{-1}$,
\begin{equation} \label{sch.c.e}
\widetilde{e}_\alpha = W^\beta_\alpha e_\beta,
\end{equation}
the local coefficients of the anchor map, $\rho^k_\alpha$, change as
\begin{equation} \label{sch.c.ro}
\widetilde{\rho}^k_\alpha = W^\beta_\alpha\rho^h_\beta\dfrac{\partial \widetilde{z}^k}{\partial z^h},
\end{equation}
while the natural frame of fields $\bigg\{\dfrac{\partial}{\partial z^{k}},\dfrac{\partial}{\partial u^{\alpha}}\bigg\}$ from $T'E$ changes by the rules
\begin{align}
\dfrac{\partial}{\partial z^{h}} &= \dfrac{\partial\widetilde{z}^k}{\partial z^h}\dfrac{\partial}{\partial\widetilde{z}k}+\dfrac{\partial M_\beta^\alpha}{\partial z^h}u^\beta\dfrac{\partial}{\partial 
\widetilde{u}^\alpha}, \label{sch.c.T'E} \\
\dfrac{\partial}{\partial u^\beta} &= M_\beta^\alpha\dfrac{\partial}{\partial\widetilde{u}^\alpha},  \notag
\end{align}
see \cite{I-Mu} for more details. $E$ is a complex manifold, such that all of the above rules can also be conjugated.

The rules of change for the local basis of sections $\{\mathcal{Z}_\alpha,\mathcal{V}_\alpha,\mathcal{Z}_{\bar{\alpha}},\mathcal{V}_{\bar{\alpha}}\}$ from $\Gamma(\mathcal{T}_{\mathbb{C}}E)$ are:
\begin{align*}
\widetilde{\mathcal{Z}}_\beta &= W^\alpha_\beta\left(\mathcal{Z}_\alpha - \rho^h_\alpha\dfrac{\partial M^\gamma_\varepsilon}{\partial z^h}W^\tau_\gamma u^\varepsilon \mathcal{V}_\tau\right), \\
\widetilde{\mathcal{V}}_\beta &= W^\alpha_\beta\mathcal{V}_\alpha,
\end{align*}
together with their conjugates.

We shall further use the well-known abbreviations 
\begin{equation*}
\dfrac{\partial}{\partial z^k} := \partial_k,\ \dfrac{\partial}{\partial u^\alpha} := \dot{\partial}_\alpha,\ \dfrac{\partial}{\partial\bar{z}^k} := \partial_{\bar{k}},\ \dfrac{\partial}{\partial\bar{u}^\alpha} := \dot{\partial}_{\bar{\alpha}}.
\end{equation*}

Locally, we describe the action of the anchor map $\rho_{\mathcal{T}}$ on $\mathcal{T}E$ by 
\begin{align*}
&\rho_{\mathcal{T}}(\mathcal{Z}_\alpha) = \rho^k_\alpha\partial_k =: \partial_\alpha,\quad \rho_{\mathcal{T}}(\mathcal{V}_\alpha) = \dot{\partial}_\alpha, \\
&\rho_{\mathcal{T}}(\mathcal{Z}_{\bar{\alpha}}) = \rho^{\bar{z}}_{\bar{\alpha}}\partial_{\bar{k}}=:\partial_{\bar{\alpha}},\quad \rho_{\mathcal{T}}(\mathcal{V}_{\bar{\alpha}}) = \dot{\partial}_{\bar{\alpha}}.
\end{align*}

\section{Nonlinear connections on $\mathcal{T}'E$}

In \cite{I-Mu}, we have considered an adapted frame on $\mathcal{T}'E$ given by a complex nonlinear connection. In \cite{I2}, we have introduced a complex nonlinear connection of Chern-Finsler type on the holomorphic prolongation $\mathcal{T}'E$. Here we only recall the notions we need for defining Laplace type operators on the holomorphic Lie algebroid.

A complex nonlinear connection on $\mathcal{T}'E$ is given by a complex vector subbundle $H\mathcal{T}'E$ of $\mathcal{T}'E$ such that $\mathcal{T}'E = H\mathcal{T}'E\oplus V\mathcal{T}'E$. A local basis for the horizontal distribution $H\mathcal{T}'E$, the adapted frame of fields on $\mathcal{T}'E$, is $\{\mathcal{X}_\alpha = \mathcal{Z}_\alpha-N_\alpha^\beta\mathcal{V}_\beta,\mathcal{V}_\alpha\}$, where $N_k^\beta$ are the coefficients of a nonlinear connection on $T'E$ and $N_\alpha^\beta = \rho_\alpha^k N_k^\beta$ are functions defined on $E$, called the coefficients of the complex nonlinear connection on $\mathcal{T}'E$. We have
\begin{equation*}
\rho_{\mathcal{T}}(\mathcal{X}_\alpha) = \delta_\alpha = \rho_\alpha^k\delta_k,
\end{equation*}
where $\{\delta_k = \partial_k-N_k^\beta\partial_\beta\}$ is an adapted frame on $T'E$ (\cite{I2}).

The rules of change for the adapted frame $\{\mathcal{X}_\alpha,\mathcal{V}_\alpha\}$ are
\begin{align*}
\widetilde{\mathcal{X}}_\alpha &= W^\beta_\alpha\mathcal{X}_\beta, \\
\widetilde{\mathcal{V}}_\alpha &= W^\beta_\alpha\mathcal{V}_\beta.
\end{align*}

On the complexified prolongation bundle, a complex nonlinear connection determines the splitting of $\mathcal{T}_{\mathbb{C}}E$ as 
\begin{equation} \label{split}
\mathcal{T}_{\mathbb{C}}E = H\mathcal{T}_{\mathbb{C}}E \oplus V\mathcal{T}_{\mathbb{C}}E \oplus \overline{H\mathcal{T}_{\mathbb{C}}E} \oplus \overline{V\mathcal{T}_{\mathbb{C}}E}  
\end{equation}
such that an adapted frame $\{\mathcal{X}_\alpha,\mathcal{V}_\alpha,\mathcal{X}_{\bar{\alpha}},\mathcal{V}_{\bar{\alpha}}\}$ is obtained on $\mathcal{T}_{\mathbb{C}}E$ with respect to the complex nonlinear connection.

\begin{proposition} \label{brackets}
The Lie brackets of the adapted frame $\{\mathcal{X}_\alpha,\mathcal{V}_\alpha,\mathcal{X}_{\bar{\alpha}},\mathcal{V}_{\bar{\alpha}}\}$ are 
\begin{align*}
[\mathcal{X}_\alpha,\mathcal{X}_\beta]_{\mathcal{T}} &=
\mathcal{C}^{\:\gamma}_{\alpha\beta}\mathcal{X}_\gamma + \mathcal{R}^{\:\gamma}_{\alpha\beta}\mathcal{V}_\gamma, \\
[\mathcal{X}_\alpha,\mathcal{X}_{\bar{\beta}}]_{\mathcal{T}} &=  (\delta_{\bar{\beta}}N^\gamma_\alpha)\mathcal{V}_\gamma - (\delta_\alpha N^{\bar{\gamma}}_{\bar{\beta}})\mathcal{V}_{\bar{\gamma}}, \\
[\mathcal{X}_\alpha,\mathcal{V}_\beta]_{\mathcal{T}} &= (\dot{\partial}_\beta N^\gamma_\alpha)\mathcal{V}_\gamma, \\
[\mathcal{X}_\alpha,\mathcal{V}_{\bar{\beta}}]_{\mathcal{T}} &= (\dot{\partial}_{\bar{\beta}}N^\gamma_\alpha)\mathcal{V}_\gamma, \\
[\mathcal{V}_\alpha,\mathcal{V}_\beta]_{\mathcal{T}} &= 0, \\
[\mathcal{V}_\alpha,\mathcal{V}_{\bar{\beta}}]_{\mathcal{T}} &= 0,
\end{align*}
where 
\begin{equation*}
\mathcal{R}^{\:\gamma}_{\alpha\beta} = \mathcal{C}^{\:\varepsilon}_{\alpha\beta}N_\varepsilon^\gamma - \delta_\alpha N^\gamma_\beta + \delta_\beta N^\gamma_\alpha.
\end{equation*}
\end{proposition}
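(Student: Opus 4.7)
My plan is to reduce every bracket to the simple seed brackets of the natural frame $\{\mathcal{Z}_\alpha,\mathcal{V}_\alpha,\mathcal{Z}_{\bar\alpha},\mathcal{V}_{\bar\alpha}\}$ of $\mathcal{T}_{\mathbb{C}}E$, and then propagate the coefficients $N_\alpha^\beta$ through the Leibniz rule of the Lie algebroid $(\mathcal{T}'E,\rho_{\mathcal{T}})$. The seed brackets are immediate from the prolongation construction: since $\mathcal{Z}_\alpha$ corresponds on the first factor to the section $e_\alpha$ of $E_{\mathbb{C}}$ and on the second factor to the vector field $\rho_\alpha^k\partial_k$ on $E$, the definition gives $[\mathcal{Z}_\alpha,\mathcal{Z}_\beta]_{\mathcal{T}} = \mathcal{C}^{\:\gamma}_{\alpha\beta}\mathcal{Z}_\gamma$, while each of $[\mathcal{Z}_\alpha,\mathcal{V}_\beta]_{\mathcal{T}}$, $[\mathcal{V}_\alpha,\mathcal{V}_\beta]_{\mathcal{T}}$, $[\mathcal{Z}_\alpha,\mathcal{Z}_{\bar\beta}]_{\mathcal{T}}$, $[\mathcal{Z}_\alpha,\mathcal{V}_{\bar\beta}]_{\mathcal{T}}$ and $[\mathcal{V}_\alpha,\mathcal{V}_{\bar\beta}]_{\mathcal{T}}$ vanishes, because $\rho_\alpha^k$ and $\mathcal{C}^{\:\gamma}_{\alpha\beta}$ are holomorphic functions on $M$ (hence independent of the fibre coordinates $u^\alpha,\bar u^\alpha$) and the $(1,0)$ and $(0,1)$ pieces uncouple on $E_{\mathbb{C}}$.

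For the brackets containing one $\mathcal{X}$ and one vertical vector I substitute $\mathcal{X}_\alpha = \mathcal{Z}_\alpha - N_\alpha^\beta\mathcal{V}_\beta$ and apply $[X,fY]_{\mathcal{T}} = f[X,Y]_{\mathcal{T}}+\rho_{\mathcal{T}}(X)(f)Y$. Then $[\mathcal{X}_\alpha,\mathcal{V}_\beta]_{\mathcal{T}}$ collapses to the single surviving Leibniz term $\rho_{\mathcal{T}}(\mathcal{V}_\beta)(N_\alpha^\gamma)\mathcal{V}_\gamma = (\dot\partial_\beta N_\alpha^\gamma)\mathcal{V}_\gamma$, and the bar version is obtained by replacing $\dot\partial_\beta$ with $\dot\partial_{\bar\beta}$. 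For $[\mathcal{X}_\alpha,\mathcal{X}_{\bar\beta}]_{\mathcal{T}}$ all four seed brackets are zero and only the two Leibniz contributions survive; after using $\partial_\alpha - N_\alpha^\gamma\dot\partial_\gamma = \delta_\alpha$ (from Section~2) they combine as $(\delta_{\bar\beta}N^\gamma_\alpha)\mathcal{V}_\gamma - (\delta_\alpha N^{\bar\gamma}_{\bar\beta})\mathcal{V}_{\bar\gamma}$.

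The only real bookkeeping lies in $[\mathcal{X}_\alpha,\mathcal{X}_\beta]_{\mathcal{T}}$, which is where I expect the main obstacle. Expanding produces the seed $\mathcal{C}^{\:\gamma}_{\alpha\beta}\mathcal{Z}_\gamma$ and four Leibniz terms
\begin{equation*}
-(\partial_\alpha N_\beta^\gamma)\mathcal{V}_\gamma + (\partial_\beta N_\alpha^\gamma)\mathcal{V}_\gamma + N_\alpha^\varepsilon(\dot\partial_\varepsilon N_\beta^\gamma)\mathcal{V}_\gamma - N_\beta^\varepsilon(\dot\partial_\varepsilon N_\alpha^\gamma)\mathcal{V}_\gamma.
\end{equation*}
Writing $\mathcal{Z}_\gamma = \mathcal{X}_\gamma + N_\gamma^\varepsilon\mathcal{V}_\varepsilon$ rewrites the seed as $\mathcal{C}^{\:\gamma}_{\alpha\beta}\mathcal{X}_\gamma + \mathcal{C}^{\:\varepsilon}_{\alpha\beta}N_\varepsilon^\gamma\mathcal{V}_\gamma$. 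Regrouping the four derivative terms as $-(\partial_\alpha - N_\alpha^\varepsilon\dot\partial_\varepsilon)N_\beta^\gamma + (\partial_\beta - N_\beta^\varepsilon\dot\partial_\varepsilon)N_\alpha^\gamma = -\delta_\alpha N_\beta^\gamma + \delta_\beta N_\alpha^\gamma$ and adding the $\mathcal{C}^{\:\varepsilon}_{\alpha\beta}N_\varepsilon^\gamma$ piece yields exactly the coefficient $\mathcal{R}^{\:\gamma}_{\alpha\beta}$ of the statement. The remaining brackets $[\mathcal{V}_\alpha,\mathcal{V}_\beta]_{\mathcal{T}}$ and $[\mathcal{V}_\alpha,\mathcal{V}_{\bar\beta}]_{\mathcal{T}}$ coincide with their seed versions and so are already zero.
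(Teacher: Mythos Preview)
Your proposal is correct. The paper states this proposition without proof (it is recalled from the author's earlier papers \cite{I-Mu,I2}), and the computation you give---reducing everything to the seed brackets of $\{\mathcal{Z}_\alpha,\mathcal{V}_\alpha,\mathcal{Z}_{\bar\alpha},\mathcal{V}_{\bar\alpha}\}$ and then using the Leibniz rule of the prolongation algebroid together with $\delta_\alpha=\partial_\alpha-N_\alpha^\varepsilon\dot\partial_\varepsilon$---is exactly the standard and expected route, with all signs and the identification of $\mathcal{R}^{\:\gamma}_{\alpha\beta}$ handled correctly.
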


The dual of the adapted frame is $\{\mathcal{Z}^\alpha,\delta\mathcal{V}^\alpha=\mathcal{V}^\alpha+N^\alpha_\beta\mathcal{Z}^\beta\}$, where $\{\mathcal{Z}^\alpha,\mathcal{V}^\alpha\}$ is the dual frame of $\{\mathcal{Z}_\alpha,\mathcal{V}_\alpha\}$.

In \cite{I2}, following the ideas from \cite{Ai}, we have introduced the Chern-Finsler nonlinear connection of the prolongation $\mathcal{T}'E$. If $F:E\rightarrow\mathbb{R}_+$ is a Finsler function on $E$ (\cite{I2}), i.e. it is homogeneous, and the complex Finsler metric tensor
\begin{equation*}
h_{\alpha \bar{\beta}}=\dot{\partial}_\alpha \dot{\partial}_{\bar{\beta}}F,
\end{equation*}
is strictly pseudoconvex, then
\begin{equation*}
N_\alpha^\beta = h^{\bar{\sigma}\beta }\partial_\alpha\dot{\partial}_{\bar{\sigma}}F
\end{equation*}
are the coefficients of the Chern-Finsler nonlinear connection of the prolongation $\mathcal{T}'E$. Also, a Chern-Finsler linear connection of type $(1,0)$ on $\mathcal{T}'E$ is given by
\begin{equation*}
L^{\:\gamma}_{\alpha\beta} = h^{\bar{\sigma}\gamma}\delta_\beta(h_{\alpha\bar{\sigma}}),\quad C^{\:\gamma}_{\alpha\beta} = h^{\bar{\sigma}\gamma}\dot{\partial}_\beta(h_{\alpha\bar{\sigma}}).
\end{equation*}
Its connection form is
\begin{equation*}
\omega^\gamma_\alpha = L^{\:\gamma}_{\alpha\beta}\mathcal{Z}^\beta + C^{\:\gamma}_{\alpha\beta}\delta\mathcal{V}^\beta.
\end{equation*}
Also, we note that
\begin{equation} \label{C}
C^{\:\gamma}_{\alpha\beta} = C^{\:\gamma}_{\beta\alpha}
\end{equation}
and
\begin{equation} \label{L,N}
L^{\:\gamma}_{\alpha\beta} = \dot{\partial}_\alpha N^\gamma_\beta.
\end{equation}
The prolongation algebroid $\mathcal{T}'E$ is called K\"ahler Finsler algebroid if
\begin{equation} \label{cond.K}
L^{\:\sigma}_{\alpha\gamma} = L^{\:\sigma}_{\gamma\alpha},
\end{equation}
see also \cite{I2} for more details. 

If we denote by $h = \det(h_{\alpha\bar{\beta}})$, then using similar reasons as in the case of a complex Finsler manifold (\cite{Z-Z}) we get
\begin{equation} \label{L,C det}
L^{\:\beta}_{\beta\alpha} = \delta_\alpha(\opn{ln}h),\quad C^{\:\beta}_{\beta\alpha} = \dot{\partial}_\alpha(\opn{ln}h).
\end{equation}

A metric structure on the complexified prolongation $\mathcal{T}_{\mathbb{C}}E$ is given by
\begin{equation} \label{metric}
\mathcal{G} = h_{\alpha\bar{\beta}}\mathcal{Z}^\alpha\otimes\bar{\mathcal{Z}}^\beta + h_{\alpha\bar{\beta}}\delta\mathcal{V}^\alpha\otimes\delta\bar{\mathcal{V}}^\beta.
\end{equation}

Next, we shall express the covariant derivatives of tensor fields on $\mathcal{T}'E$ with respect to the Chern-Finsler connection, following the ideas from the case of a complex Finsler manifold (\cite{Z-Z}). A complex horizontal covariant tensor field is given by
\begin{equation*}
T = \dfrac{1}{p!q!}T_{\alpha_1\dots\alpha_p\bar{\beta}_1\dots\bar{\beta}_q}(z,u)\mathcal{Z}^{\alpha_1}\wedge\dots\wedge\mathcal{Z}^{\alpha_p}\wedge\mathcal{Z}^{\bar{\beta}_1}\wedge\dots\wedge\mathcal{Z}^{\bar{\beta}_q},
\end{equation*}
where for the changes \eqref{sch.c.e}, the local components $T_{\alpha_1\dots\alpha_p\bar{\beta}_1\dots\bar{\beta}_q}(z,u)$ change by the rules
\begin{equation*}
\widetilde{T}_{\alpha_1\dots\alpha_p\bar{\beta}_1\dots\bar{\beta}_q}(\widetilde{z},\widetilde{u}) = T_{\gamma_1\dots\gamma_p\bar{\varepsilon}_1\dots\bar{\varepsilon}_q}M^{\gamma_1}_{\alpha_1}\dots M^{\gamma_p}_{\alpha_p}M^{\bar{\varepsilon}_1}_{\bar{\beta}_1}\dots M^{\bar{\varepsilon}_q}_{\bar{\beta}_q}.
\end{equation*}

Similarly, we can define a horizontal contravariant tensor field, whose local components,  $T^{\alpha_1\dots\alpha_p\bar{\beta}_1\dots\bar{\beta}_q}(z,u)$, change as
\begin{equation*}
\widetilde{T}^{\alpha_1\dots\alpha_p\bar{\beta}_1\dots\bar{\beta}_q}(\widetilde{z},\widetilde{u}) = T^{\gamma_1\dots\gamma_p\bar{\varepsilon}_1\dots\bar{\varepsilon}_q}W_{\gamma_1}^{\alpha_1}\dots W_{\gamma_p}^{\alpha_p}W_{\bar{\varepsilon}_1}^{\bar{\beta}_1}\dots W_{\bar{\varepsilon}_q}^{\bar{\beta}_q}.
\end{equation*}

The differential of a function $f$ on the complexified prolongation $\mathcal{T}_{\mathbb{C}}E$ is locally expressible as
\begin{equation*}
df = (\delta_\alpha f)\mathcal{Z}^\alpha + (\dot{\partial}_\alpha f)\delta\mathcal{V}^\alpha + (\delta_{\bar{\alpha}}f)\mathcal{Z}^{\bar{\alpha}} + (\dot{\partial}_{\bar{\alpha}}f)\delta\mathcal{V}^{\bar{\alpha}}.
\end{equation*}
With respect to the \eqref{split} decomposition of the prolongation, the differential can be written as
\begin{equation*}
df = \partial^hf + \partial^vf + \bar{\partial}^hf + \bar{\partial}^vf,
\end{equation*}
where
\begin{align*}
\partial^hf &= (\delta_\alpha f)\mathcal{Z}^\alpha = \bigg(\rho^k_\alpha\dfrac{\partial f}{\partial z^k} - N^\beta_\alpha\dfrac{\partial f}{\partial u^\beta}\bigg)\mathcal{Z}^\alpha,\quad
\partial^vf &= (\dot{\partial}_\alpha f)\delta\mathcal{V}^\alpha = \dfrac{\partial f}{\partial u^\alpha}\delta\mathcal{V}^\alpha,\\
\bar{\partial}^hf &= (\delta_{\bar{\alpha}}f)\mathcal{Z}^{\bar{\alpha}} = \bigg(\rho^{\bar{k}}_{\bar{\alpha}}\dfrac{\partial f}{\partial\bar{z}^k} - N^{\bar{\beta}}_{\bar{\alpha}}\dfrac{\partial f}{\partial\bar{u}^\beta}\bigg)\mathcal{Z}^{\bar{\alpha}},\quad
\bar{\partial}^vf &= (\dot{\partial}_{\bar{\alpha}}f)\delta\mathcal{V}^{\bar{\alpha}} = \dfrac{\partial f}{\partial\bar{u}^\alpha}\delta\mathcal{V}^{\bar{\alpha}}.
\end{align*}
In particular,
\begin{equation*}
d\mathcal{Z}^\alpha = -\dfrac{1}{2}C^{\:\alpha}_{\beta\gamma}\mathcal{Z}^\beta\wedge\mathcal{Z}^\gamma  - \dfrac{1}{2}C^{\:\alpha}_{\bar{\beta}\gamma}\mathcal{Z}^{\bar{\beta}}\wedge\mathcal{Z}^\gamma,\qquad d\mathcal{V}^\alpha = 0.
\end{equation*}

We shall now restrict our considerations on the horizontal bundle $H\mathcal{T}E$ of the prolongation and describe the horizontal derivatives of tensors with respect to the Chern-Finsler connection of the prolongation.

First, we define in a classical manner the horizontal covariant derivative of a horizontal covariant tensor field $T_{\alpha_1\dots\alpha_p\bar{\beta}_1\dots\bar{\beta}_q}(z,u)$ as
\begin{align*}
\nabla_{\mathcal{X}_\gamma}T_{\alpha_1\dots\alpha_p\bar{\beta}_1\dots\bar{\beta}_q} &= \mathcal{X}_\gamma(T_{\alpha_1\dots\alpha_p\bar{\beta}_1\dots\bar{\beta}_q}) - \sum_{i=1}^p T_{\alpha_1\dots\alpha_{i-1}\varepsilon\alpha_{i+1}\dots\alpha_p\bar{\beta}_1\dots\bar{\beta}_q} L^{\:\varepsilon}_{\alpha_i\gamma},\\
\nabla_{\mathcal{X}_{\bar{\gamma}}}T_{\alpha_1\dots\alpha_p\bar{\beta}_1\dots\bar{\beta}_q} &= \mathcal{X}_{\bar{\gamma}}(T_{\alpha_1\dots\alpha_p\bar{\beta}_1\dots\bar{\beta}_q}) - \sum_{j=1}^q T_{\alpha_1\dots\alpha_p\bar{\beta}_1\dots\bar{\beta}_{j-1}\bar{\varepsilon}\bar{\beta}_{j+1}\dots\bar{\beta}_q} L^{\:\bar{\varepsilon}}_{\bar{\beta}_j\bar{\gamma}}.
\end{align*}

Further, the horizontal covariant derivative of a contravariant tensor\\ $T^{\alpha_1\dots\alpha_p\bar{\beta}_1\dots\bar{\beta}_q}(z,u)$ is defined by
\begin{align*}
\nabla_{\mathcal{X}_\gamma}T^{\alpha_1\dots\alpha_p\bar{\beta}_1\dots\bar{\beta}_q} &= \mathcal{X}_\gamma(T^{\alpha_1\dots\alpha_p\bar{\beta}_1\dots\bar{\beta}_q}) + \sum_{i=1}^p T^{\alpha_1\dots\alpha_{i-1}\varepsilon\alpha_{i+1}\dots\alpha_p\bar{\beta}_1\dots\bar{\beta}_q} L^{\:\alpha_i}_{\varepsilon\gamma},\\
\nabla_{\mathcal{X}_{\bar{\gamma}}}T^{\alpha_1\dots\alpha_p\bar{\beta}_1\dots\bar{\beta}_q} &= \mathcal{X}_{\bar{\gamma}}(T^{\alpha_1\dots\alpha_p\bar{\beta}_1\dots\bar{\beta}_q}) + \sum_{j=1}^q T^{\alpha_1\dots\alpha_p\bar{\beta}_1\dots\bar{\beta}_{j-1}\bar{\varepsilon}\bar{\beta}_{j+1}\dots\bar{\beta}_q} L^{\:\bar{\beta}_j}_{\bar{\varepsilon}\bar{\gamma}}.
\end{align*}
The vertical covariant derivatives can be defined in a similar manner.

\section{Vertical and horizontal Laplace type\\ operators for functions on $E$}

In this section, we shall define vertical and horizontal Laplace type operators for functions on the prolongation $\mathcal{T}E$ (we drop the index $\mathbb{C}$), following the ideas from the case of complex Finsler bundles (\cite{Z-Z,Ida}). For this purpose, we need to define the divergence of a vector field on $\mathcal{T}E$ and the gradient of a function on $\mathcal{T}E$.

First, we consider the Hermitian form associated to the metric structure $\mathcal{G}$ from \eqref{metric},
\begin{equation} \label{f.H.prel}
\Phi = i h_{\alpha\bar{\beta}}\big(\mathcal{Z}^\alpha\wedge\mathcal{Z}^{\bar{\beta}} + \delta\mathcal{V}^\alpha\wedge\delta\mathcal{V}^{\bar{\beta}}\big) = \Phi^h + \Phi^v.
\end{equation}

Denote by
\begin{align*}
(\Phi^h)^m &= i^m(-1)^{\frac{m(m-1)}{2}}m!\:h\:\mathcal{Z}^1\wedge\dots\wedge\mathcal{Z}^m\wedge\mathcal{Z}^{\bar{1}}\wedge\dots\wedge\mathcal{Z}^{\bar{m}},\\
(\Phi^v)^m &= i^m(-1)^{\frac{m(m-1)}{2}}m!\:h\:\delta\mathcal{V}^1\wedge\dots\wedge\delta\mathcal{V}^m\wedge\delta\mathcal{V}^{\bar{1}}\wedge\dots\wedge\delta\mathcal{V}^{\bar{m}},
\end{align*}
such that we can associate with $\mathcal{G}$ a volume form on $\mathcal{T}E$ by
\begin{equation}
\label{f.vol}
d\mathcal{V} = \dfrac{1}{(2m)!}\Phi^{2m} = i^{2m^2}h^2\:\mathcal{Z}\wedge\bar{\mathcal{Z}}\wedge\delta\mathcal{V}\wedge\overline{\delta\mathcal{V}},
\end{equation}
where
\begin{equation*}
\mathcal{Z} = \mathcal{Z}^1\wedge\dots\wedge\mathcal{Z}^m,\quad \delta\mathcal{V} = \delta\mathcal{V}^1\wedge\dots\wedge\delta\mathcal{V}^m
\end{equation*}
and their conjugates.

Let $Z = Z^\alpha\mathcal{X}_\alpha + V^\alpha\mathcal{V}_\alpha + Z^{\bar{\alpha}}\mathcal{X}_{\bar{\alpha}} + V^{\bar{\alpha}}\mathcal{V}_{\bar{\alpha}} \in \Gamma(\mathcal{T}E)$. The divergence of $Z$ is defined by the classical equation
\begin{equation*}
\mathcal{L}_Zd\mathcal{V} = (\opn{div}Z)d\mathcal{V},
\end{equation*}
where $\mathcal{L}_Z$ is the Lie derivative.

The expression of $Z$ according to the splitting \eqref{split} gives the following decomposition of the divergence of $Z$:
\begin{equation*}
\opn{div}Z = \opn{div}^hZ + \opn{div}^vZ + \opn{div}^{\bar{h}}Z + \opn{div}^{\bar{v}}Z,
\end{equation*}
where $\opn{div}^hZ=\opn{div}Z^h,\ \opn{div}^vZ=\opn{div}Z^v,\ \opn{div}^{\bar{h}}Z=\opn{div}Z^{\bar{h}},\ \opn{div}^{\bar{v}}Z=\opn{div}Z^{\bar{v}}$. In particular, on $\mathcal{T}'E$, we have

\begin{proposition}
The components of the divergence of $Z = Z^\alpha\mathcal{X}_\alpha + V^\alpha\mathcal{V}_\alpha \in \Gamma(\mathcal{T}'E)$ are
\begin{align}
\label{div}
\opn{div}^hZ &= \nabla_{\mathcal{X}_\alpha}Z^\alpha - Z^\alpha L_\alpha - Z^\alpha\mathcal{C}_\alpha,\\
\nonumber
\opn{div}^vZ &= \nabla_{\mathcal{V}_\alpha}V^\alpha + V^\alpha C_\alpha,\\
\end{align}
where we have denoted $L_\alpha=L^{\:\beta}_{\alpha\beta}-L^{\:\beta}_{\beta\alpha}$, $C_\alpha=C^{\:\beta}_{\alpha\beta}=C^{\:\beta}_{\beta\alpha}$ and $\mathcal{C}_\alpha=\mathcal{C}^{\:\beta}_{\alpha\beta}$.
\end{proposition}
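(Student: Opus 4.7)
The plan is to apply Cartan's magic formula to the top-degree form $d\mathcal{V}$, giving $\mathcal{L}_Z d\mathcal{V} = d(i_Z d\mathcal{V})$, and to decompose $Z$ according to \eqref{split} and treat each piece separately (the antiholomorphic parts follow by conjugation). A convenient identity encapsulates both computations on $\mathcal{T}'E$: if $\{E_i\}$ is a local frame with dual coframe $\{\theta^i\}$ and brackets $[E_i,E_k]_{\mathcal{T}} = c^j_{ik} E_j$, and the volume form is $\mu\,\theta^1\wedge\cdots\wedge\theta^n$, then the standard relation $d\theta^j = -\tfrac{1}{2} c^j_{ik}\theta^i\wedge\theta^k$ (from $d\omega(X,Y) = -\omega([X,Y])$ on a dual basis) yields
$$\operatorname{div}(f^i E_i) = E_i(f^i) + f^i E_i(\ln\mu) - f^i c^k_{ik}.$$
Applying this with $\mu = i^{2m^2} h^2$ reduces the proof to identifying two objects: the logarithmic derivatives of $h^2$ and the traces $c^k_{ik}$.

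For the horizontal piece, take $E_i = \mathcal{X}_\alpha$. By \eqref{L,C det}, $\mathcal{X}_\alpha(\ln h^2) = 2\delta_\alpha(\ln h) = 2L^\beta_{\beta\alpha}$. The trace $c^k_{\alpha k}$ is read off from Proposition \ref{brackets}: $[\mathcal{X}_\alpha,\mathcal{X}_\beta]_{\mathcal{T}}$ contributes $\mathcal{C}^\beta_{\alpha\beta} = \mathcal{C}_\alpha$; $[\mathcal{X}_\alpha,\mathcal{V}_\beta]_{\mathcal{T}}$ contributes $\dot{\partial}_\beta N^\beta_\alpha = L^\beta_{\beta\alpha}$ by \eqref{L,N}; and the two mixed brackets with $\mathcal{X}_{\bar\beta}$ and $\mathcal{V}_{\bar\beta}$ land entirely in the vertical distribution and contribute nothing. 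Hence $c^k_{\alpha k} = \mathcal{C}_\alpha + L^\beta_{\beta\alpha}$, and substituting in the general formula yields $\mathcal{X}_\alpha(Z^\alpha) + Z^\alpha L^\beta_{\beta\alpha} - Z^\alpha \mathcal{C}_\alpha$. Using $\nabla_{\mathcal{X}_\alpha} Z^\alpha = \mathcal{X}_\alpha(Z^\alpha) + Z^\varepsilon L^\alpha_{\varepsilon\alpha}$, this rearranges to the stated $\nabla_{\mathcal{X}_\alpha} Z^\alpha - Z^\alpha L_\alpha - Z^\alpha \mathcal{C}_\alpha$ with $L_\alpha = L^\beta_{\alpha\beta} - L^\beta_{\beta\alpha}$.

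For the vertical piece, take $E_i = \mathcal{V}_\alpha$. By \eqref{L,C det}, $\mathcal{V}_\alpha(\ln h^2) = 2 C^\beta_{\beta\alpha}$, and the trace $c^k_{\mathcal{V}_\alpha k}$ vanishes because every bracket involving $\mathcal{V}_\alpha$ in Proposition \ref{brackets} is either zero or sends $\mathcal{V}_\alpha$ into a subspace transverse to the second slot. Thus $\operatorname{div}^v Z = \mathcal{V}_\alpha(V^\alpha) + 2 V^\alpha C^\beta_{\beta\alpha}$, which equals $\nabla_{\mathcal{V}_\alpha} V^\alpha + V^\alpha C_\alpha$ once one uses the symmetry \eqref{C} to identify $V^\varepsilon C^\alpha_{\varepsilon\alpha} = V^\alpha C_\alpha$. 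The main obstacle is not difficult algebra but the systematic bookkeeping of the structure constants; the subtle point is precisely that the asymmetric combination $L_\alpha = L^\beta_{\alpha\beta} - L^\beta_{\beta\alpha}$, rather than the naive symmetric trace $L^\beta_{\beta\alpha}$, emerges for $\operatorname{div}^h Z$, reflecting the fact that the algebroid trace $\mathcal{C}_\alpha$ enters alongside the connection trace and that one must keep these two contributions in parallel.
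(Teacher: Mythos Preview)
Your proof is correct and follows essentially the same approach as the paper. Both compute $\mathcal{L}_Z d\mathcal{V}$ on the adapted frame and identify the two ingredients $E_i(\ln h^2)$ and the traces of the structure constants from Proposition~\ref{brackets}; you simply package this via the general divergence formula $\operatorname{div}(f^iE_i)=E_i(f^i)+f^iE_i(\ln\mu)-f^ic^k_{ik}$, whereas the paper expands the Lie-derivative formula directly, but the substance of the computation is identical.
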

\begin{proof}
Using Proposition \ref{brackets} and \eqref{L,N}, we obtain:
\begin{align*}
[Z,\mathcal{X}_\beta] &= \big(Z^\alpha\mathcal{C}^{\:\gamma}_{\alpha\beta}-\delta_\beta(Z^\gamma)\big)\mathcal{X}_\gamma + \big(Z^\alpha\mathcal{R}^{\:\gamma}_{\alpha\beta}-\delta_\beta(V^\gamma) - V^\alpha L^{\:\gamma}_{\alpha\beta}\big)\mathcal{V}_\gamma,\\
[Z,\mathcal{X}_{\bar{\beta}}] &= -\delta_{\bar{\beta}}(Z^\gamma)\mathcal{X}_\gamma + \big(Z^\alpha\delta_{\bar{\beta}}(N^\gamma_\alpha) - \delta_{\bar{\beta}}(V^\gamma)\big)\mathcal{V}_\gamma\\
&\quad - \big(Z^\alpha\delta_\alpha(N^{\bar{\gamma}}_{\bar{\beta}}) + V^\alpha\dot{\partial}_\alpha (N^{\bar{\gamma}}_{\bar{\beta}})\big)\mathcal{V}_{\bar{\gamma}},\\
[Z,\mathcal{V}_\beta] &= -\dot{\partial}_\beta(Z^\gamma)\mathcal{X}_\gamma + \big(Z^\alpha L^{\:\gamma}_{\beta\alpha} - \dot{\partial}_\beta(V^\gamma)\big)\mathcal{V}_\gamma,\\
[Z,\mathcal{V}_{\bar{\beta}}] &= -\dot{\partial}_{\bar{\beta}}(Z^\gamma)\mathcal{X}_\gamma + \big(Z^\alpha\dot{\partial}_{\bar{\beta}}(N^\gamma_\alpha) - \dot{\partial}_{\bar{\beta}}(V^\gamma)\big)\mathcal{V}_\gamma.
\end{align*}

Then, from the definitions of the covariant derivatives, we get
\begin{align*}
\nabla_{\mathcal{X}_\alpha}Z^\alpha &= \delta_\alpha(Z^\alpha)+Z^\alpha L^{\:\beta}_{\alpha\beta},\\
\nabla_{\mathcal{V}_\alpha}V^\alpha &= \dot{\partial}_\alpha(V^\alpha)+V^\alpha C^{\:\beta}_{\alpha\beta}.
\end{align*}

From the definition of the divergence, we have
\begin{align*}
(\opn{div}Z)h^2 &= (\opn{div}Z)d\mathcal{V}(\mathcal{X}_1,\dots,\mathcal{X}_m,\mathcal{X}_{\bar{1}},\dots,\mathcal{X}_{\bar{m}},\mathcal{V}_1,\dots,\mathcal{V}_m,\mathcal{V}_{\bar{1}},\dots,\mathcal{V}_{\bar{m}})\\
&= (\mathcal{L}_Zd\mathcal{V})(\mathcal{X}_1,\dots,\mathcal{X}_m,\mathcal{X}_{\bar{1}},\dots,\mathcal{X}_{\bar{m}},\mathcal{V}_1,\dots,\mathcal{V}_m,\mathcal{V}_{\bar{1}},\dots,\mathcal{V}_{\bar{m}})\\
&= Z(d\mathcal{V}((\mathcal{X}_1,\dots,\mathcal{X}_m,\mathcal{X}_{\bar{1}},\dots,\mathcal{X}_{\bar{m}},\mathcal{V}_1,\dots,\mathcal{V}_m,\mathcal{V}_{\bar{1}},\dots,\mathcal{V}_{\bar{m}})))\\
&\quad -\sum_{\beta=1}^m d\mathcal{V}(\mathcal{X}_1,\dots,\mathcal{X}_{\beta-1},[Z,\mathcal{X}_\beta],\mathcal{X}_{\beta+1},\dots,\mathcal{X}_m,\dots,\dots,\dots)\\
&\quad -\sum_{\beta=1}^m d\mathcal{V}(\dots,\mathcal{X}_{\bar{1}},\dots,\mathcal{X}_{\overline{\beta-1}},[Z,\mathcal{X}_{\bar{\beta}}],\mathcal{X}_{\overline{\beta+1}},\dots,\mathcal{X}_{\bar{m}},\dots,\dots)\\
&\quad -\sum_{\beta=1}^m d\mathcal{V}(\dots,\dots,\mathcal{V}_1,\dots,\mathcal{V}_{\beta-1},[Z,\mathcal{V}_\beta],\mathcal{V}_{\beta+1},\dots,\mathcal{V}_m,\dots)\\
&\quad -\sum_{\beta=1}^m d\mathcal{V}(\dots,\dots,\dots,\mathcal{V}_{\bar{1}},\dots,\mathcal{V}_{\overline{\beta-1}},[Z,\mathcal{V}_{\bar{\beta}}],\mathcal{V}_{\overline{\beta+1}},\dots,\mathcal{V}_{\bar{m}})\\
&= Z(h^2) - \big(Z^\alpha\mathcal{C}^{\:\beta}_{\alpha\beta} - \delta_\beta(Z^\beta) + Z^\alpha L^{\:\beta}_{\beta\alpha} + \dot{\partial}_\beta(V^\beta)\big)h^2.
\end{align*}
Hence,
\begin{align*}
\opn{div}^hZ &= h^{-2}Z^\alpha\mathcal{X}_\alpha(h^2) - Z^\alpha\mathcal{C}^{\:\beta}_{\alpha\beta} + \delta_\beta(Z^\beta) - Z^\alpha L^{\:\beta}_{\beta\alpha}\\
&= 2Z^\alpha h^{-1}\delta_\alpha(h) - Z^\alpha\mathcal{C}^{\:\beta}_{\alpha\beta} + \delta_\beta(Z^\beta) - Z^\alpha L^{\:\beta}_{\beta\alpha}\\
&= 2Z^\alpha\delta_\alpha(ln\:h) - Z^\alpha\mathcal{C}^{\:\beta}_{\alpha\beta} + \delta_\beta(Z^\beta) - Z^\alpha L^{\:\beta}_{\beta\alpha}\\
&= Z^\alpha L^{\:\beta}_{\beta\alpha} - Z^\alpha\mathcal{C}^{\:\beta}_{\alpha\beta} + \delta_\alpha(Z^\alpha)\\
&= \nabla_{\mathcal{X}_\alpha}Z^\alpha - Z^\alpha L_\alpha - Z^\alpha\mathcal{C}_\alpha,\\
\opn{div}^vZ &= h^{-2}V^\alpha\mathcal{V}_\alpha(h^2) + \dot{\partial}_\beta(V^\beta)\\
&= 2V^\alpha\dot{\partial}_\alpha(ln\:h) + \dot{\partial}_\beta(V^\beta)\\
&= 2V^\alpha C^{\:\beta}_{\beta\alpha} + \dot{\partial}_\alpha(V^\alpha)\\
&= \nabla_{\mathcal{V}_\alpha}V^\alpha + V^\alpha C_\alpha.
\end{align*}
\end{proof}

Note that, for a K\"ahler Finsler algebroid, the condition \eqref{cond.K} yields $L_\alpha=0$, thus
\begin{equation*}
\opn{div}^hZ = \nabla_{\mathcal{X}_\alpha}X^\alpha - X^\alpha\mathcal{C}_\alpha.
\end{equation*}

The following step is defining the gradient of a function, which can be introduced in a classical manner by
\begin{equation*}
\mathcal{G}(Z,\opn{grad}f) = Zf,\quad \forall Z\in\Gamma(\mathcal{T}'E),
\end{equation*}
and decomposing it in the adapted frame of $\mathcal{T}'E$ as
\begin{equation*}
\opn{grad}f = \opn{grad}^hf + \opn{grad}^vf,
\end{equation*}
where
\begin{equation} \label{grad}
\opn{grad}^hf = h^{\bar{\gamma}\alpha}(\delta_{\bar{\gamma}}f)\mathcal{X}_\alpha,\quad \opn{grad}^vf = h^{\bar{\varepsilon}\beta}(\dot{\partial}_{\bar{\varepsilon}}f)\mathcal{V}_\beta.
\end{equation}

We will define two Laplace operators for functions, a horizontal and a vertical one. The horizontal Laplace operator for functions on the prolongation algebroid is
\begin{equation} \label{Lhf}
\Delta^hf = (\opn{div}^h\circ\opn{grad}^h)f,
\end{equation}
and the vertical one is
\begin{equation} \label{Lvf}
\Delta^vf = (\opn{div}^v\circ\opn{grad}^v)f.
\end{equation}

The expressions for the two Laplace operators are given in the following
\begin{proposition}
For a function $f\in C^\infty(E)$, we have
\begin{equation} \label{Lhfc}
\Delta^hf = \dfrac{1}{h}\delta_\alpha\big[hh^{\bar{\gamma}\alpha}(\delta_{\bar{\gamma}}f)\big] - \big[h^{\bar{\gamma}\alpha}(\delta_{\bar{\gamma}}f)\big]\mathcal{C}_\alpha
\end{equation}
and
\begin{equation} \label{Lvfc}
\Delta^vf = \dfrac{1}{h}\dot{\partial}_\alpha\big[hh^{\bar{\gamma}\alpha}(\dot{\partial}_{\bar{\gamma}}f)\big] + \big[h^{\bar{\gamma}\alpha}(\dot{\partial}_{\bar{\gamma}}f)\big]C_\alpha.
\end{equation}
\end{proposition}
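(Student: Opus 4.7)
The plan is to substitute the explicit expressions for $\opn{grad}^hf$ and $\opn{grad}^vf$ from \eqref{grad} into the divergence formulas \eqref{div}, and then simplify using the trace identities \eqref{L,C det} together with the cancellation $L^{\:\beta}_{\alpha\beta}-L_\alpha = L^{\:\beta}_{\beta\alpha}$ coming from the definition $L_\alpha = L^{\:\beta}_{\alpha\beta}-L^{\:\beta}_{\beta\alpha}$.

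For the horizontal Laplacian, set $Z^\alpha := h^{\bar{\gamma}\alpha}(\delta_{\bar{\gamma}}f)$, so that $\opn{grad}^hf = Z^\alpha\mathcal{X}_\alpha$. Applying \eqref{div} and the expression $\nabla_{\mathcal{X}_\alpha}Z^\alpha = \delta_\alpha(Z^\alpha)+Z^\alpha L^{\:\beta}_{\alpha\beta}$ already recorded in the proof of the preceding proposition, I would compute
\begin{equation*}
\Delta^hf = \delta_\alpha(Z^\alpha) + Z^\alpha L^{\:\beta}_{\alpha\beta} - Z^\alpha\bigl(L^{\:\beta}_{\alpha\beta}-L^{\:\beta}_{\beta\alpha}\bigr) - Z^\alpha\mathcal{C}_\alpha = \delta_\alpha(Z^\alpha) + Z^\alpha L^{\:\beta}_{\beta\alpha} - Z^\alpha\mathcal{C}_\alpha.
\end{equation*}
Then invoking \eqref{L,C det} to replace $L^{\:\beta}_{\beta\alpha}$ with $\delta_\alpha(\ln h)$ and using the Leibniz-type identity $\delta_\alpha(Z^\alpha)+Z^\alpha\delta_\alpha(\ln h) = \tfrac{1}{h}\delta_\alpha(hZ^\alpha)$ yields \eqref{Lhfc}.

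For the vertical Laplacian, set $V^\alpha := h^{\bar{\gamma}\alpha}(\dot{\partial}_{\bar{\gamma}}f)$ and proceed symmetrically. Since $\nabla_{\mathcal{V}_\alpha}V^\alpha = \dot{\partial}_\alpha(V^\alpha)+V^\alpha C^{\:\beta}_{\alpha\beta}$ and $C_\alpha = C^{\:\beta}_{\alpha\beta} = C^{\:\beta}_{\beta\alpha}$ by \eqref{C}, formula \eqref{div} gives
\begin{equation*}
\Delta^vf = \dot{\partial}_\alpha(V^\alpha) + V^\alpha C^{\:\beta}_{\alpha\beta} + V^\alpha C_\alpha = \dot{\partial}_\alpha(V^\alpha) + V^\alpha\dot{\partial}_\alpha(\ln h) + V^\alpha C_\alpha,
\end{equation*}
after using \eqref{L,C det} on one of the trace terms. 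Re-packaging the first two terms as $\tfrac{1}{h}\dot{\partial}_\alpha(hV^\alpha)$ produces \eqref{Lvfc}.

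I do not anticipate a genuine obstacle: the computation is essentially an exercise in collecting the traces of the Chern–Finsler connection coefficients and recognizing the resulting expressions as logarithmic derivatives of $h$. The only step worth being careful about is matching the two different trace conventions $L^{\:\beta}_{\alpha\beta}$ versus $L^{\:\beta}_{\beta\alpha}$ that appear in $\nabla_{\mathcal{X}_\alpha}Z^\alpha$ and in $L_\alpha$; making their difference explicit is what isolates the $\delta_\alpha(\ln h)$ term needed to absorb the factor of $h$ inside the $\delta_\alpha$ derivative, and completely analogously for the vertical case (where the symmetry \eqref{C} removes any such ambiguity).
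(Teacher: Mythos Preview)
Your proposal is correct and follows essentially the same route as the paper's own proof: substitute the gradient components into the divergence formulas \eqref{div}, expand the covariant derivative, use $L_\alpha = L^{\:\beta}_{\alpha\beta}-L^{\:\beta}_{\beta\alpha}$ to isolate $L^{\:\beta}_{\beta\alpha}=\delta_\alpha(\ln h)$ (respectively $C_\alpha=\dot{\partial}_\alpha(\ln h)$) via \eqref{L,C det}, and then absorb that logarithmic derivative into $\tfrac{1}{h}\delta_\alpha(h\,\cdot\,)$ (respectively $\tfrac{1}{h}\dot{\partial}_\alpha(h\,\cdot\,)$). The only cosmetic difference is that the paper writes the intermediate vertical step as $\dot{\partial}_\alpha(V^\alpha)+2V^\alpha C_\alpha$ before splitting off one $C_\alpha$, whereas you keep the two trace terms separate throughout.
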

\begin{proof}
Using \eqref{div}, \eqref{grad}, \eqref{Lhf}, \eqref{Lvf} and \eqref{L,C det} we obtain:
\begin{align*}
\Delta^hf &= \nabla_{\mathcal{X}_\alpha}\big[h^{\bar{\gamma}\alpha}(\delta_{\bar{\gamma}}f)\big] - h^{\bar{\gamma}\alpha}(\delta_{\bar{\gamma}}f)L_\alpha - h^{\bar{\gamma}\alpha}(\delta_{\bar{\gamma}}f)\mathcal{C}_\alpha\\
&= \delta_\alpha\big[h^{\bar{\gamma}\alpha}(\delta_{\bar{\gamma}}f)\big] + h^{\bar{\gamma}\alpha}(\delta_{\bar{\gamma}}f)L^{\:\beta}_{\beta\alpha} - h^{\bar{\gamma}\alpha}(\delta_{\bar{\gamma}}f)\mathcal{C}_\alpha\\
&= \dfrac{1}{h}\delta_\alpha\big[hh^{\bar{\gamma}\alpha}(\delta_{\bar{\gamma}}f)\big] - h^{\bar{\gamma}\alpha}(\delta_{\bar{\gamma}}f)\mathcal{C}_\alpha.
\end{align*}
Also,
\begin{align*}
\Delta^vf &= \nabla_{\mathcal{V}_\alpha}\big[h^{\bar{\gamma}\alpha}(\dot{\partial}_{\bar{\gamma}}f)\big] + h^{\bar{\gamma}\alpha}(\dot{\partial}_{\bar{\gamma}}f)C_\alpha\\
&= \dot{\partial}_\alpha\big[h^{\bar{\gamma}\alpha}(\dot{\partial}_{\bar{\gamma}}f)\big] + 2h^{\bar{\gamma}\alpha}(\dot{\partial}_{\bar{\gamma}}f)C_\alpha\\
&= \dfrac{1}{h}\dot{\partial}_\alpha\big[hh^{\bar{\gamma}\alpha}(\dot{\partial}_{\bar{\gamma}}f)\big] + h^{\bar{\gamma}\alpha}(\dot{\partial}_{\bar{\gamma}}f)C_\alpha.
\end{align*}
\end{proof}

We note that the two Laplacian operators can also be expressed in terms of the covariant derivatives with respect to the Chern-Finsler connection as follows:
\begin{align*}
\Delta^hf &= h^{\bar{\gamma}\alpha}\left[\nabla_{\mathcal{X}_\alpha}\nabla_{\mathcal{X}_{\bar{\gamma}}}f - \mathcal{C}_\alpha\left(\nabla_{\mathcal{X}_{\bar{\gamma}}}f\right)\right]\\
\Delta^vf &= h^{\bar{\gamma}\alpha}\left[\nabla_{\mathcal{V}_\alpha}\nabla_{\mathcal{V}_{\bar{\gamma}}}f + C_\alpha\left(\nabla_{\mathcal{V}_{\bar{\gamma}}}f\right)\right]
\end{align*}

In the end of this section, following \cite{Z-Z}, we will prove a result which will be used in the last section to obtain the expressions of a horizontal Laplace operator for forms.

\begin{lemma}
If $Z=Z^\alpha\mathcal{X}_\alpha\in\Gamma(H\mathcal{T}'E)$, then
\begin{equation}
\label{div.dV}
(\opn{div}Z + \mathcal{C}^h)d\mathcal{V} = d[i_Zd\mathcal{V}],\quad (\opn{div}\bar{Z} + \bar{\mathcal{C}}^h)d\mathcal{V} = d[i_{\bar{Z}}d\mathcal{V}],
\end{equation}
where $\mathcal{C}^h = Z^\alpha\mathcal{C}_\alpha = Z^\alpha\mathcal{C}^{\:\beta}_{\alpha\beta}$.
\end{lemma}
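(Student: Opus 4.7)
The plan is to apply Cartan's magic formula $\mathcal{L}_Z=d\circ i_Z+i_Z\circ d$ to the volume form $d\mathcal{V}$. Since $d\mathcal{V}$ is of top degree on the complexified prolongation -- $\mathcal{T}_{\mathbb{C}}E$ has complex rank $4m$, so any $(4m+1)$-form vanishes -- the relation $d(d\mathcal{V})=0$ holds for dimensional reasons, and Cartan's identity reduces to
\[
d(i_Z d\mathcal{V})=\mathcal{L}_Z d\mathcal{V}.
\]
Thus the task becomes computing $\mathcal{L}_Z d\mathcal{V}$ explicitly and recognizing the result as $(\opn{div}Z+\mathcal{C}^h)d\mathcal{V}$.

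To compute $\mathcal{L}_Z d\mathcal{V}$, I would repeat in modified form the bookkeeping used in the proof of Proposition~3.1: evaluate on the adapted basis $(\mathcal{X}_1,\ldots,\mathcal{X}_m,\mathcal{X}_{\bar{1}},\ldots,\mathcal{X}_{\bar{m}},\mathcal{V}_1,\ldots,\mathcal{V}_m,\mathcal{V}_{\bar{1}},\ldots,\mathcal{V}_{\bar{m}})$, expand via
\[
(\mathcal{L}_Z\omega)(Y_1,\ldots,Y_k)=\rho_{\mathcal{T}}(Z)\bigl(\omega(Y_1,\ldots,Y_k)\bigr)-\sum_{j}\omega\bigl(Y_1,\ldots,[Z,Y_j]_{\mathcal{T}},\ldots,Y_k\bigr),
\]
and substitute the brackets from Proposition~\ref{brackets}. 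Because $Z=Z^\alpha\mathcal{X}_\alpha$ is purely horizontal, only the diagonal $\mathcal{X}_\beta$-coefficient of $[Z,\mathcal{X}_\beta]_{\mathcal{T}}$ and the diagonal $\mathcal{V}_\beta$-coefficient of $[Z,\mathcal{V}_\beta]_{\mathcal{T}}$ survive; every off-diagonal substitution produces two identical columns in the determinant evaluating the wedge, and the brackets $[Z,\mathcal{X}_{\bar\beta}]_{\mathcal{T}}$, $[Z,\mathcal{V}_{\bar\beta}]_{\mathcal{T}}$ contribute nothing since for $V=0$ they contain no $\mathcal{X}_{\bar\gamma}$ or $\mathcal{V}_{\bar\gamma}$ piece. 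Combining this with $Z(h^2)=2h^2 Z^\alpha \delta_\alpha(\ln h)=2h^2 Z^\alpha L^{\:\beta}_{\beta\alpha}$ from \eqref{L,C det}, and using the identity \eqref{L,N}, the expansion collapses to the expected form.

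The last step is the algebraic identification
\[
\opn{div}^h Z+\mathcal{C}^h=\nabla_{\mathcal{X}_\alpha}Z^\alpha-Z^\alpha L_\alpha=\delta_\alpha(Z^\alpha)+Z^\alpha L^{\:\beta}_{\beta\alpha},
\]
obtained at once from $\nabla_{\mathcal{X}_\alpha}Z^\alpha=\delta_\alpha(Z^\alpha)+Z^\varepsilon L^{\:\alpha}_{\varepsilon\alpha}$ and the definition $L_\alpha=L^{\:\beta}_{\alpha\beta}-L^{\:\beta}_{\beta\alpha}$. The conjugate identity $(\opn{div}\bar Z+\bar{\mathcal{C}}^h)d\mathcal{V}=d(i_{\bar Z}d\mathcal{V})$ then follows by complex conjugation of the entire argument, using that $h$ is real. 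The principal technical hurdle is organizing the combinatorics of the determinant expansion for a $(4m)$-form; isolating the diagonal contributions as indicated reduces the argument to the same algebraic cancellations already carried out in Proposition~3.1.
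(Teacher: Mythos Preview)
Your Cartan-formula approach has a genuine gap: it proves the wrong identity. By your own reduction, $d(i_Z d\mathcal{V}) = \mathcal{L}_Z d\mathcal{V}$; but the paper \emph{defines} the divergence precisely by $\mathcal{L}_Z d\mathcal{V} = (\opn{div}Z)\,d\mathcal{V}$. Hence your argument yields $d(i_Z d\mathcal{V}) = (\opn{div}Z)\,d\mathcal{V}$, with no room for the extra structure-function term $\mathcal{C}^h = Z^\alpha\mathcal{C}^{\:\beta}_{\alpha\beta}$. The ``recognition step'' you propose --- redoing the Lie-derivative bookkeeping of Proposition~3.1 --- will reproduce exactly formula \eqref{div}, namely $\opn{div}^h Z = \nabla_{\mathcal{X}_\alpha}Z^\alpha - Z^\alpha L_\alpha - Z^\alpha\mathcal{C}_\alpha$, so the identification $\mathcal{L}_Z d\mathcal{V} = (\opn{div}Z + \mathcal{C}^h)\,d\mathcal{V}$ you need is simply unavailable: it contradicts the very definition of $\opn{div}$ you are invoking.

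The paper does not use Cartan's identity. It writes out $i_Z d\mathcal{V}$ explicitly as $\sum_\alpha (-1)^{\alpha-1}Z^\alpha h^2\,\mathcal{Z}^{\hat\alpha}\wedge\overline{\mathcal{Z}}\wedge\delta\mathcal{V}\wedge\overline{\delta\mathcal{V}}$, then applies $d$ termwise using the formulas for $d(Z^\alpha h^2)$, $d\mathcal{Z}^\alpha$ and $d(\delta\mathcal{V}^\beta)$. Only two groups of contributions are kept --- those from differentiating the scalar coefficient $Z^\alpha h^2$ and those from $d(\delta\mathcal{V}^\beta)$ via \eqref{L,N} --- while the action of $d$ on the incomplete holomorphic wedge $\mathcal{Z}^{\hat\alpha}$ (which is exactly where a $\mathcal{C}^{\:\beta}_{\alpha\beta}$ contribution would arise through $d\mathcal{Z}^\gamma$) is not retained. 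This direct term-by-term computation is what produces $[\delta_\alpha(Z^\alpha)+Z^\alpha L^{\:\beta}_{\beta\alpha}]\,d\mathcal{V}=(\opn{div}^hZ+\mathcal{C}^h)\,d\mathcal{V}$. If you want to match the paper's result, you must abandon the Cartan shortcut and carry out the same explicit expansion of $d(i_Z d\mathcal{V})$; the Lie-derivative route, taken at face value, is inconsistent with the lemma as stated.
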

\begin{proof}
First,
\begin{equation*}
i_Zd\mathcal{V} = \sum_{\alpha=1}^m (-1)^{\alpha-1}Z^\alpha h^2\mathcal{Z}^{\hat{\alpha}}\wedge\overline{\mathcal{Z}}\wedge\delta\mathcal{V}\wedge\overline{\delta\mathcal{V}},
\end{equation*}
where we have denoted $\mathcal{Z}^{\hat{\alpha}}=\mathcal{Z}^1\wedge\dots\wedge\widehat{\mathcal{Z}}^\alpha\wedge\dots\wedge\mathcal{Z}^m$.
Also,
\begin{align*}
d(\delta\mathcal{V}^\beta) &= d(\mathcal{V}^\beta+N^\beta_\gamma\mathcal{Z}^\gamma)\\
&= \delta_\alpha(N^\beta_\gamma)\mathcal{Z}^\alpha\wedge\mathcal{Z}^\gamma + \delta_{\bar{\alpha}}(N^\beta_\gamma)\mathcal{Z}^{\bar{\alpha}}\wedge\mathcal{Z}^\gamma + \dot{\partial}_\alpha(N^\beta_\gamma)\delta\mathcal{V}^\alpha\wedge\mathcal{Z}^\gamma\\
&\quad + \dot{\partial}_{\bar{\alpha}}(N^\beta_\gamma)\delta\mathcal{Z}^{\bar{\alpha}}\wedge\mathcal{Z}^\gamma - \dfrac{1}{2}N^\beta_\gamma\mathcal{C}^{\:\gamma}_{\alpha\varepsilon}\mathcal{Z}^\alpha\wedge\mathcal{Z}^\varepsilon - \dfrac{1}{2}N^\beta_\gamma\mathcal{C}^{\:\gamma}_{\bar{\alpha}\varepsilon}\mathcal{Z}^{\bar{\alpha}}\wedge\mathcal{Z}^\varepsilon\\
&= \bigg[\delta_\alpha(N^\beta_\gamma) - \dfrac{1}{2}N^\beta_\varepsilon\mathcal{C}^{\:\varepsilon}_{\alpha\gamma}\bigg]\mathcal{Z}^\alpha\wedge\mathcal{Z}^\gamma + \bigg[\delta_{\bar{\alpha}}(N^\beta_\gamma) - \dfrac{1}{2}N^\beta_\varepsilon\mathcal{C}^{\:\varepsilon}_{\bar{\alpha}\gamma}\bigg]\mathcal{Z}^{\bar{\alpha}}\wedge\mathcal{Z}^\gamma\\
&\quad + \dot{\partial}_\alpha(N^\beta_\gamma)\delta\mathcal{V}^\alpha\wedge\mathcal{Z}^\gamma + \dot{\partial}_{\bar{\alpha}}(N^\beta_\gamma)\delta\mathcal{Z}^{\bar{\alpha}}\wedge\mathcal{Z}^\gamma.
\end{align*}
Thus,
\begin{align*}
d[i_Zd\mathcal{V}] &= \sum_\alpha\sum_\beta(-1)^{\alpha-1}\delta_\beta(Z^\alpha h^2)\mathcal{Z}^\beta\wedge\mathcal{Z}^{\hat{\alpha}}\wedge\overline{\mathcal{Z}}\wedge\delta\mathcal{V}\wedge\overline{\delta\mathcal{V}}\\
&\quad +\sum_\alpha\sum_\beta(-1)^{\alpha-1+2m-1+\beta-1}Z^\alpha h^2\mathcal{Z}^{\hat{\alpha}}\wedge\overline{\mathcal{Z}}\\
&\quad \wedge\delta\mathcal{V}^1\wedge\dots\wedge d(\delta\mathcal{V}^\beta)\wedge\dots\wedge\delta\mathcal{V}^m\wedge\overline{\delta\mathcal{V}}\\
&= [\delta_\alpha(Z^\alpha h^2)-Z^\alpha L^{\:\beta}_{\beta\alpha}h^2]\mathcal{Z}\wedge\overline{\mathcal{Z}}\wedge\delta\mathcal{V}\wedge\overline{\delta\mathcal{V}}.
\end{align*}
But
\begin{align*}
\delta_\alpha(Z^\alpha h^2)-Z^\alpha L^{\:\beta}_{\beta\alpha}h^2 &= [\delta_\alpha(Z^\alpha)-2Z^\alpha\delta_\alpha(\opn{ln}h)-Z^\alpha L^{\:\beta}_{\beta\alpha}]h^2\\
&= [\delta_\alpha(Z^\alpha)+Z^\alpha L^{\:\beta}_{\beta\alpha}]h^2\\
&= [\nabla_{\mathcal{X}_\alpha}Z^\alpha - Z^\alpha L_\alpha]h^2\\
&= [\opn{div}^hZ+Z^\alpha L_\alpha]h^2,
\end{align*}
such that the first identity is proved. The second identity can be obtained by conjugation.
\end{proof}

Using this and \eqref{div}, we get
\begin{proposition}
If $Z=Z^\alpha\mathcal{X}_\alpha$ is a horizontal field with compact support on the prolongation of a Finsler algebroid, then
\begin{equation}
\label{int1}
\int_E (\nabla_{\mathcal{X}_\alpha}Z^\alpha - Z^\alpha L_\alpha)d\mathcal{V} = 0,\quad \int_E (\nabla_{\mathcal{X}_{\bar{\alpha}}}\overline{Z^\alpha} - \overline{Z^\alpha L_\alpha})d\mathcal{V} = 0.
\end{equation}
\end{proposition}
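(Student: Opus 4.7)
The plan is to read the desired identities directly off the preceding lemma via Stokes' theorem, once the left-hand sides are rewritten using \eqref{div}. The whole argument amounts to an integration-by-parts on the prolongation algebroid.

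First, I would simplify $\opn{div}Z + \mathcal{C}^h$ for the purely horizontal field $Z = Z^\alpha\mathcal{X}_\alpha$. Because the vertical and anti-holomorphic components of $Z$ all vanish, the decomposition $\opn{div}Z = \opn{div}^hZ + \opn{div}^vZ + \opn{div}^{\bar h}Z + \opn{div}^{\bar v}Z$ collapses to its first term, and the formula \eqref{div} gives
\begin{equation*}
\opn{div}Z + \mathcal{C}^h = \big(\nabla_{\mathcal{X}_\alpha}Z^\alpha - Z^\alpha L_\alpha - Z^\alpha\mathcal{C}_\alpha\big) + Z^\alpha\mathcal{C}_\alpha = \nabla_{\mathcal{X}_\alpha}Z^\alpha - Z^\alpha L_\alpha,
\end{equation*}
so the structure-function term cancels exactly as intended.

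Next, I would invoke the lemma just proved: $(\opn{div}Z + \mathcal{C}^h)d\mathcal{V} = d[i_Z d\mathcal{V}]$. Substituting the simplification above gives $(\nabla_{\mathcal{X}_\alpha}Z^\alpha - Z^\alpha L_\alpha)d\mathcal{V} = d[i_Z d\mathcal{V}]$. Since $Z$ has compact support, so does $i_Z d\mathcal{V}$, and integrating over $E$ and applying Stokes' theorem causes the right-hand side to vanish, yielding the first identity in \eqref{int1}.

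For the second identity I would run the same argument on $\bar Z = \overline{Z^\alpha}\mathcal{X}_{\bar\alpha}$, using the conjugated relation $(\opn{div}\bar Z + \bar{\mathcal{C}}^h)d\mathcal{V} = d[i_{\bar Z}d\mathcal{V}]$ already recorded in \eqref{div.dV}. There is no real obstacle here: the proposition is essentially a corollary that packages the preceding lemma as a Stokes identity and specializes it to horizontal fields. The only point requiring mild care is the bookkeeping in the first step, to confirm that $\opn{div}Z$ reduces cleanly to $\opn{div}^hZ$ and that adding $\mathcal{C}^h$ annihilates the $Z^\alpha\mathcal{C}_\alpha$ contribution.
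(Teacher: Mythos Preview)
Your proposal is correct and follows exactly the approach the paper intends: the paper records the proposition with the terse justification ``Using this and \eqref{div}, we get,'' where ``this'' is the preceding lemma \eqref{div.dV}, and you have simply unpacked that sentence by combining \eqref{div} with the lemma and applying Stokes' theorem on the compactly supported form $i_Zd\mathcal{V}$.
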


In the case of K\"ahler Finsler algebroids, \eqref{int1} becomes
\begin{equation}
\label{int2}
\int_E \nabla_{\mathcal{X}_\alpha}Z^\alpha d\mathcal{V} = 0,\quad \int_E \nabla_{\mathcal{X}_{\bar{\alpha}}}\overline{Z^\alpha} d\mathcal{V} = 0.
\end{equation}

\section{A horizontal Laplace operator for forms on the prolongation algebroid $\mathcal{T}E$}

In this section we define a horizontal Laplace-type operator for forms with compact support defined on the prolongation of a Finsler algebroid.

Consider two horizontal forms with compact support on $\mathcal{T}E$, $\Psi$ and $\Phi$, locally defined by
\begin{align*}
\Psi &= \dfrac{1}{p!q!}\psi_{A_p\bar{B}_q}\mathcal{Z}^{A_p}\wedge\mathcal{Z}^{\bar{B}_q},\\
\Phi &= \dfrac{1}{p!q!}\phi_{A_p\bar{B}_q}\mathcal{Z}^{A_p}\wedge\mathcal{Z}^{\bar{B}_q},
\end{align*}
where we have denoted the multi-indices $A_p = (\alpha_1\dots\alpha_p)$, $\bar{B}_q = (\bar{\beta}_1\dots\bar{\beta}_q)$ and $\mathcal{Z}^{A_p} = \mathcal{Z}^{\alpha_1}\wedge\dots\wedge\mathcal{Z}^{\alpha_p}$, $\mathcal{Z}_{\bar{B}_q} = \mathcal{Z}^{\bar{\beta}_1}\wedge\dots\wedge\mathcal{Z}^{\bar{\beta}_p}$. We have considered here that the coefficients of the forms are functions defined on $E$, i.e., $\psi_{A_p\bar{B}_q} = \psi_{A_p\bar{B}_q}(z,u)$ and $\phi_{A_p\bar{B}_q} = \phi_{A_p\bar{B}_q}(z,u)$, as in the following we will consider the integrals over $E$.

We now define
\begin{equation} \label{inn.prod}
<\Psi,\Phi> = \dfrac{1}{p!q!}\psi_{A_p\bar{B}_q}\overline{\phi^{\bar{A}_pB_q}} = \sum \psi_{A_p\bar{B}_q}\overline{\phi^{\bar{A}_pB_q}},
\end{equation}
where the sum is after $\alpha_1<\dots<\alpha_p$, $\bar{\beta}_1<\dots<\bar{\beta}_q$ and $\phi^{\bar{A}_pB_q} = \phi^{\bar{\alpha}_1\dots\bar{\alpha}_p\beta_1\dots\beta_q} = \phi_{\mu_1\dots\mu_p\bar{\nu}_1\dots\bar{\nu}_q}h^{\bar{\alpha}_1\mu_1}\dots h^{\bar{\alpha}_p\mu_p}h^{\bar{\nu}_1\beta_1}\dots h^{\bar{\nu}_q\beta_q}$. This inner product is independent of the local coordinates, such that $<\Psi,\Phi>$ is a global inner product on $E$. In particular, the "norm" of a form $\Psi$ is defined by
\begin{equation*}
|\Psi|^2 = <\Psi,\Psi> = \dfrac{1}{p!q!}\psi_{A_p\bar{B}_q}\overline{\psi^{\bar{A}_pB_q}}
\end{equation*}

By using the volume form \eqref{f.vol}, we can now define a global inner product on the space of horizontal forms on $\mathcal{T}E$ as
\begin{equation} \label{prod.int.forme}
(\Psi,\Phi) = \int_E <\Psi,\Phi>d\mathcal{V},\quad ||\Psi||^2 = \int_E <\Psi,\Psi>d\mathcal{V}.
\end{equation}

Similarly to the case of complex vector bundles, \cite{Mu,M-P}, we define horizontal differentials of horizontal $(p,q)$-forms by
\begin{align}
\nonumber
(\partial^h\Psi)_{A_{p+1}\overline{B}_q} &= \sum_{i=1}^{p+1} (-1)^{i-1}\delta_{\alpha_i}(\psi_{\alpha_1\dots\hat{\alpha}_i\dots\alpha_{p+1}\overline{B}_q}),\\
\label{dif.h}
(\bar{\partial}^h\Psi)_{A_p\overline{B}_{q+1}} &= (-1)^p\sum_{i=1}^{q+1} (-1)^{i-1}\delta_{\bar{\beta}_i}(\psi_{A_p\bar{\beta}_1\dots\hat{\bar{\beta}}_i\dots\bar{\beta}_{q+1}}).
\end{align}

For the case of K\"ahler Finsler algebroids, we can use the identity \eqref{cond.K} to replace these horizontal derivatives by the horizontal covariant derivatives, that is,
\begin{align*}
(\partial^h\Psi)_{A_{p+1}\overline{B}_q} &= \sum_{i=1}^{p+1} (-1)^{i-1}\nabla_{\mathcal{X}_{\alpha_i}}\psi_{\alpha_1\dots\hat{\alpha}_i\dots\alpha_{p+1}\overline{B}_q},\\
(\bar{\partial}^h\Psi)_{A_p\overline{B}_{q+1}} &= (-1)^p\sum_{i=1}^{q+1} (-1)^{i-1}\nabla_{\mathcal{X}_{\bar{\beta}_i}}\psi_{A_p\bar{\beta}_1\dots\hat{\bar{\beta}}_i\dots\bar{\beta}_{q+1}},
\end{align*}

Following the usual steps in defining a Laplace operator for forms, we now need to introduce the adjoint operators of $\partial^h$ and $\bar{\partial}^h$ with respect to the inner product \eqref{prod.int.forme}. Denote by $\partial^{*h}$ and $\bar{\partial}^{*h}$ the two adjoint operators. We have
\begin{align*}
\partial^{*h}:\mathcal{A}_{p,q}(H\mathcal{T}E)\rightarrow\mathcal{A}_{p,q-1}(H\mathcal{T}E),\quad (\partial^h\Psi,\Phi) = (\Psi,\partial^{*h}\Psi),\\
\bar{\partial}^{*h}:\mathcal{A}_{p,q}(H\mathcal{T}E)\rightarrow\mathcal{A}_{p-1,q}(H\mathcal{T}E),\quad (\bar{\partial}^h\Psi,\Phi) = (\Psi,\bar{\partial}^{*h}\Psi),
\end{align*}
where $\mathcal{A}_{p,q}(H\mathcal{T}E)$ denotes the space of horizontal forms of $(p,q)$-type with compact support on the prolongation algebroid.

We are interested in expressing the adjoint operator $\bar{\partial}^{*h}$. For this purpose, let $\Psi\in\mathcal{A}_{p,q-1}(H\mathcal{T}E)$ and $\Phi\in\mathcal{A}_{p,q}(H\mathcal{T}E)$. Then, a similar computation to the one from \cite{Z-Z} leads to
\begin{align} \label{adj1}
(\bar{\partial}^{*h}\Phi)^{\overline{A}_p\beta_2\dots\beta_q} &= -(-1)^p h^{-2}\delta_{\beta_1}(\phi^{\overline{A}_p\beta_1\dots\beta_q}h^2)\\
\nonumber &= -(-1)^p\sum_{\beta_1} [\delta_{\beta_1}+2\delta_{\beta_1}(\opn{ln}h)]\phi^{\overline{A}_p\beta_1\dots\beta_q},
\end{align}
which, by lowering the indices, gives
\begin{equation} \label{adj2}
(\bar{\partial}^{*h}\Phi)_{A_p\bar{\beta}_2\dots\bar{\beta}_q} = (-1)^{p+1}h^{\bar{\varepsilon}\gamma}\delta_\gamma(\phi_{A_p\bar{\varepsilon}\bar{\beta}_2\dots\bar{\beta}_q}).
\end{equation}

We can now introduce a horizontal Laplace operator, $\Box^h:\mathcal{A}_{p,q}(H\mathcal{T}E)\rightarrow\mathcal{A}_{p,q}(H\mathcal{T}E)$,  by setting
\begin{equation} \label{def.L}
\Box^h = \bar{\partial}^h\circ\bar{\partial}^{*h} + \bar{\partial}^{*h}\circ\bar{\partial}^h.
\end{equation}

The expression of $\Box^h$ is given in the following
\begin{theorem}
The horizontal Laplace operator for a horizontal differential form $\Phi\in\mathcal{A}_{p,q}(H\mathcal{T}E)$ on the prolongation of a Finsler Lie algebroid is given by
\begin{equation} \label{exp.L.forme}
(\Box^h\Phi)_{A_p\overline{B}_q} = -h^{\bar{\varepsilon}\gamma}\bigg(\delta_\gamma\circ\delta_{\bar{\varepsilon}}(\phi_{A_p\overline{B}_q}) - \sum_i(-1)^{i-1}[\delta_\gamma,\delta_{\bar{\beta}_i}]\phi_{A_p\bar{\varepsilon}\bar{\beta}_1\dots\hat{\bar{\beta}}_i\dots\bar{\beta}_q} \bigg).
\end{equation}
\end{theorem}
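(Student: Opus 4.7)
The plan is to unfold the definition $\Box^h = \bar{\partial}^h\circ\bar{\partial}^{*h} + \bar{\partial}^{*h}\circ\bar{\partial}^h$ in local coordinates, using \eqref{dif.h} for $\bar{\partial}^h$ and \eqref{adj2} for $\bar{\partial}^{*h}$. The target right-hand side of \eqref{exp.L.forme} naturally splits into a pure second-derivative piece $-h^{\bar{\varepsilon}\gamma}\delta_\gamma\delta_{\bar{\varepsilon}}\phi_{A_p\overline{B}_q}$ plus a commutator sum; the strategy is to show that these two pieces come, respectively, from the single "diagonal" term of $\bar{\partial}^{*h}\bar{\partial}^h\Phi$ in which the contracted index $\bar{\varepsilon}$ does not replace any of the $\bar{\beta}_i$, and from the pairwise cancellation between $\bar{\partial}^{*h}\bar{\partial}^h$ and $\bar{\partial}^h\bar{\partial}^{*h}$ of the second-derivative terms in which it does.

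First, I would apply \eqref{dif.h} to $\Phi$ and insert the contracted index $\bar{\varepsilon}$ as the first barred index of $\bar{\partial}^h\Phi$, splitting the $q+1$ summands in \eqref{dif.h} into the term hit by $\delta_{\bar{\varepsilon}}$ ($i=1$) and the $q$ terms hit by $\delta_{\bar{\beta}_j}$ ($j=1,\dots,q$). Feeding this into \eqref{adj2} and using $(-1)^{p+1}\cdot(-1)^p = -1$, I would obtain
\begin{align*}
(\bar{\partial}^{*h}\bar{\partial}^h\Phi)_{A_p\overline{B}_q}
&= -h^{\bar{\varepsilon}\gamma}\delta_\gamma\delta_{\bar{\varepsilon}}\phi_{A_p\overline{B}_q}\\
&\quad + h^{\bar{\varepsilon}\gamma}\sum_{i=1}^q (-1)^{i-1}\delta_\gamma\delta_{\bar{\beta}_i}\phi_{A_p\bar{\varepsilon}\bar{\beta}_1\dots\hat{\bar{\beta}}_i\dots\bar{\beta}_q}.
\end{align*}

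Second, I would compute $\bar{\partial}^h\bar{\partial}^{*h}\Phi$ in the reverse order: apply \eqref{adj2} first, producing a $(p,q-1)$-form carrying a $\delta_\gamma$, and then \eqref{dif.h} to bring us back to $(p,q)$. Again using $(-1)^p\cdot(-1)^{p+1}=-1$ and keeping the outer $\delta_{\bar{\beta}_i}$ undistributed, this yields the purely "off-diagonal" sum
$$(\bar{\partial}^h\bar{\partial}^{*h}\Phi)_{A_p\overline{B}_q} = -h^{\bar{\varepsilon}\gamma}\sum_{i=1}^q (-1)^{i-1}\delta_{\bar{\beta}_i}\delta_\gamma\phi_{A_p\bar{\varepsilon}\bar{\beta}_1\dots\hat{\bar{\beta}}_i\dots\bar{\beta}_q}.$$

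Summing the two compositions, the two $i$-indexed sums have matching signs $(-1)^{i-1}$ and identical multi-indices, and combine to $h^{\bar{\varepsilon}\gamma}(-1)^{i-1}[\delta_\gamma,\delta_{\bar{\beta}_i}]\phi_{A_p\bar{\varepsilon}\bar{\beta}_1\dots\hat{\bar{\beta}}_i\dots\bar{\beta}_q}$, while the only uncancelled pure-second-derivative term is $-h^{\bar{\varepsilon}\gamma}\delta_\gamma\delta_{\bar{\varepsilon}}\phi_{A_p\overline{B}_q}$; factoring $-h^{\bar{\varepsilon}\gamma}$ gives exactly \eqref{exp.L.forme}. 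The main obstacle I anticipate is the sign and multi-index bookkeeping: one must track the $(-1)^p$ prefactor of $\bar{\partial}^h$ and the $(-1)^{p+1}$ prefactor of $\bar{\partial}^{*h}$ through both compositions, and — more delicately — verify that when $\bar{\varepsilon}$ is inserted as the new first barred slot in $\bar{\partial}^h\Phi$, the reindexing of the remaining $\bar{\beta}_j$'s reproduces exactly the same multi-index $(A_p,\bar{\varepsilon},\bar{\beta}_1,\dots,\hat{\bar{\beta}}_i,\dots,\bar{\beta}_q)$ with sign $(-1)^{i-1}$ that appears in the other composition, so that the commutator structure emerges cleanly.
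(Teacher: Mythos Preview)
Your proposal is correct and follows exactly the same route as the paper's own proof: both compute $(\bar{\partial}^{*h}\bar{\partial}^h\Phi)_{A_p\overline{B}_q}$ and $(\bar{\partial}^h\bar{\partial}^{*h}\Phi)_{A_p\overline{B}_q}$ directly from \eqref{dif.h} and \eqref{adj2}, obtaining the same two displayed expressions you wrote, and then add them so that the off-diagonal sums combine into the commutator term. The sign and multi-index bookkeeping you flag as the main obstacle is precisely the only work there is, and you have it right.
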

\begin{proof}
From \eqref{dif.h} and \eqref{adj2} we have
\begin{align*}
(\bar{\partial}^h\circ\bar{\partial}^{*h}\Phi)_{A_p\overline{B}_q} &= -\sum_i(-1)^{i-1}h^{\bar{\varepsilon}\gamma}(\delta_{\bar{\beta}_i}\circ\delta_\gamma)\big(\phi_{A_p\bar{\varepsilon}\bar{\beta}_1\dots\hat{\bar{\beta}}_i\dots\bar{\beta}_q}\big).
\end{align*}
Also,
\begin{align*}
(\bar{\partial}^{*h}\circ\bar{\partial}^h\Phi)_{A_p\overline{B}_q} &= - h^{\bar{\varepsilon}\gamma}\bigg((\delta_\gamma\circ\delta_{\bar{\varepsilon}})(\phi_{A_p\overline{B}_q}) - \sum_i(-1)^i(\delta_\gamma\circ\delta_{\bar{\beta}_i})\big(\phi_{A_p\bar{\varepsilon}\bar{\beta}_1\dots\hat{\bar{\beta}}_i\dots\bar{\beta}_q}\big)\bigg)
\end{align*}
and \eqref{exp.L.forme} follows immediately.
\end{proof}

Let us now consider the case of K\"ahler Finsler algebroids, when using \eqref{int2} yields
\begin{equation*}
\int_E\nabla_{\mathcal{X}_\beta}\bigg(\phi_{A_p\overline{B}_q}\overline{\psi^{\overline{A}_p\beta B_q}}\bigg)d\mathcal{V} = 0
\end{equation*}
and a similar computation as in the case of a K\"ahler Finsler manifold \cite{Z-Z} leads to
\begin{theorem}
On a K\"ahler Finsler algebroid, the horizontal Laplace operator for a horizontal differential form on $\mathcal{T}E$ is
\begin{equation}
\label{lapl.f.K}
(\Box^h\Phi)_{A_p\overline{B}_q} = - h^{\bar{\varepsilon}\gamma}\nabla_{\mathcal{X}_\gamma}\circ\nabla_{\mathcal{X}_{\bar{\varepsilon}}}(\phi_{A_p\overline{B}_q}) + \sum_i h^{\bar{\varepsilon}\gamma}[\nabla_{\mathcal{X}_\gamma},\nabla_{\mathcal{X}_{\bar{\beta}_i}}]\phi_{A_p\bar{\varepsilon}\bar{\beta}_1\dots\hat{\bar{\beta}}_i\dots\bar{\beta}_q}.
\end{equation}
\end{theorem}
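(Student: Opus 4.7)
The strategy is to adapt the proof of the preceding theorem to the K\"ahler setting by re-expressing each ingredient in terms of horizontal covariant derivatives, thereby converting the commutator of $\delta$-derivatives in \eqref{exp.L.forme} into the cleaner commutator of $\nabla_{\mathcal{X}}$-derivatives. The key inputs will be the covariant form of $\bar{\partial}^h$ already noted after \eqref{dif.h}, which is valid under \eqref{cond.K}, together with the integration-by-parts identity \eqref{int2} characteristic of K\"ahler Finsler algebroids.

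First, I would re-derive $\bar{\partial}^{*h}$ under the K\"ahler hypothesis. Starting from $(\bar{\partial}^h\Psi,\Phi)=(\Psi,\bar{\partial}^{*h}\Phi)$, I expand $\bar{\partial}^h\Psi$ using the covariant formula from the K\"ahler case, contract with $h^{\bar{\varepsilon}\gamma}$ coming from the inner product \eqref{inn.prod}, and apply \eqref{int2} to the divergence of a horizontal field built from the contraction $\phi_{A_p\bar{B}_q}\overline{\psi^{\bar{A}_p\beta B_q}}$. This shifts one $\nabla_{\mathcal{X}_\gamma}$ off $\Psi$ onto $\Phi$ and yields
\[
(\bar{\partial}^{*h}\Phi)_{A_p\bar{\beta}_2\dots\bar{\beta}_q}=(-1)^{p+1}h^{\bar{\varepsilon}\gamma}\nabla_{\mathcal{X}_\gamma}\bigl(\phi_{A_p\bar{\varepsilon}\bar{\beta}_2\dots\bar{\beta}_q}\bigr),
\]
which is the covariant analogue of \eqref{adj2} and reduces to it when the K\"ahler symmetry is used to simplify the spurious $L$-terms that would otherwise appear.

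Next, I substitute this covariant form of $\bar{\partial}^{*h}$ into the definition \eqref{def.L} and compose with $\bar{\partial}^h$. The piece $\bar{\partial}^{*h}\circ\bar{\partial}^h$ produces a "diagonal" contribution where the freshly introduced $\bar{\varepsilon}$-index is contracted with the $\nabla_{\mathcal{X}_{\bar{\varepsilon}}}$ derivative, giving $-h^{\bar{\varepsilon}\gamma}\nabla_{\mathcal{X}_\gamma}\nabla_{\mathcal{X}_{\bar{\varepsilon}}}\phi_{A_p\bar{B}_q}$ after exploiting the antisymmetry of $\phi$ in its barred indices, together with cross-terms of the form $\pm h^{\bar{\varepsilon}\gamma}\nabla_{\mathcal{X}_\gamma}\nabla_{\mathcal{X}_{\bar{\beta}_i}}\phi_{A_p\bar{\varepsilon}\bar{\beta}_1\dots\hat{\bar{\beta}}_i\dots\bar{\beta}_q}$. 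The complementary composition $\bar{\partial}^h\circ\bar{\partial}^{*h}$ yields analogous cross-terms, but with the derivatives in the opposite order $\nabla_{\mathcal{X}_{\bar{\beta}_i}}\nabla_{\mathcal{X}_\gamma}$ and with opposite sign. Adding the two compositions pairs these cross-terms index-by-index into the commutators $[\nabla_{\mathcal{X}_\gamma},\nabla_{\mathcal{X}_{\bar{\beta}_i}}]$, producing exactly \eqref{lapl.f.K}.

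The principal obstacle I anticipate is the sign and index bookkeeping. The $(-1)^{i-1}$ factors coming from omitting the $i$-th slot in \eqref{dif.h}, the $(-1)^{p+1}$ prefactor in the adjoint formula, and the antisymmetrization of $\phi$ in its barred indices must all align so that the diagonal $\bar{\varepsilon}\bar{\varepsilon}$-term survives as a clean $-h^{\bar{\varepsilon}\gamma}\nabla_{\mathcal{X}_\gamma}\nabla_{\mathcal{X}_{\bar{\varepsilon}}}\phi_{A_p\bar{B}_q}$ and the off-diagonal $i\ge 1$ terms assemble into the commutator sum with the correct sign. Once the sign conventions are verified, the algebraic reduction mirrors the Zhong--Zhong computation \cite{Z-Z} for complex Finsler manifolds, with the only substantive change being that every $\delta$-derivative is replaced by the corresponding $\nabla_{\mathcal{X}}$-derivative, a replacement which is legitimate precisely because of \eqref{cond.K}.
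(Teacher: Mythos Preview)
Your proposal is correct and follows essentially the same approach as the paper. The paper's own argument is only a sketch: it invokes \eqref{int2} applied to $\nabla_{\mathcal{X}_\beta}\big(\phi_{A_p\overline{B}_q}\overline{\psi^{\overline{A}_p\beta B_q}}\big)$ and then defers the remaining computation to \cite{Z-Z}; your outline of recomputing $\bar{\partial}^{*h}$ covariantly via \eqref{int2} and then pairing the cross-terms from the two compositions into commutators is precisely the content of that deferred computation.
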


\end{document}